\theoremstyle{plain}
\newtheorem{theorem}{Theorem}[section]
\newtheorem{proposition}[theorem]{Proposition}
\newtheorem{lemma}[theorem]{Lemma}
\newtheorem{corollary}[theorem]{Corollary}
\newtheorem{definition}[theorem]{Definition}
\theoremstyle{definition}
\theoremstyle{remark}
\newtheorem{remark}[theorem]{Remark}
\newcommand{\N}{\ensuremath{\mathbb{N}}}
\newcommand{\R}{\ensuremath{\mathbb{R}}}
\renewcommand{\P}{\ensuremath{\mathbb{P}}}
\newcommand{\E}{\ensuremath{\mathbb{E}}}
\newcommand{\noemi}[1]{\textcolor{black}{\textrm{#1}}}
\newcommand{\noeminew}[1]{\textcolor{black}{\textrm{#1}}}
\title{\noemi{Graphical representation} of certain moment dualities and application to population models with balancing selection}
\author{Sabine Jansen\footnote{
Mathematical Institute, Leiden University,
Postbus 9512,
2300 RA Leiden, The Netherlands.
E-Mail: sabine.jansen@math.leidenuniv.nl}\hspace{.2cm}  
and Noemi Kurt\footnote{Institut f\"ur Mathematik, TU Berlin MA 7-5, Stra\ss e des 17. Juni 136, D-10623 Berlin. E-Mail: kurt@math.tu-berlin.de. Supported in part 
by DFG project BL 1105/2-1.}}
\date{\today}
\begin{document}
\maketitle

\begin{abstract}
We investigate dual mechanisms for interacting particle systems. Generalizing an approach of Alkemper and Hutzenthaler in the case of coalescing duals, we show that a simple 
linear transformation leads to a moment duality of suitably rescaled processes. More precisely, we show how dualities of interacting particle systems of the form 
$H(A,B)=q^{|A\cap B|}, A,B\subset\{0,1\}^N, q\in[-1,1),$ are rescaled to yield moment dualities of rescaled processes. We discuss in particular the case $q=-1,$ which explains 
why certain population models with balancing selection have an annihilating dual process.  We also consider different values of $q,$ and answer a question by Alkemper and 
Hutzenthaler.   \\

\noindent
\emph{Keywords:} Markov processes, duality, interacting particle systems, graphical representation, annihilation, selection.\\

\noindent
\emph{MSC Subject classification:} 60K35.
\end{abstract}

\section{Introduction and main result}
Dualities have proved to be a powerful tool in the analysis of interacting particle systems and population models. For interacting particle systems, one generally considers 
two kind of duals: coalescing and annihilating duals, \cite{Liggett, Griffeath, SudburyLloyd}. In connection with population models, rescaled interacting particle systems and 
their limits are of considerable interest, and it is natural to ask in which sense rescaling preserves dualities.  Alkemper and Hutzenthaler \cite{AlkemperHutzenthaler} consider 
the case of coalescing dual mechanisms, and derive a `prototype' moment duality under rescaling. Swart \cite{Swart06} uses a similar idea to obtain dualities of stepping stone 
models. In this paper, we consider a general form of a duality for interacting particle systems, cf. \cite{SudburyLloyd}. This includes coalescing as well as \emph{annihilating dual 
mechanisms}. We prove a `prototype' moment duality of linearly transformed rescaled processes, in a similar fashion as for the coalescing case, 
and we discuss the situation for annihilating duals in some more details. As an application, we consider \noemi{one-dimensional} branching annihilating processes and their duals. Our approach explains 
why population models with balancing selection generally have an annihilating dual process, as was  found, for example,
in \cite{BEM} \noemi{in a spatial population model}. We \noeminew{also introduce randomized dual mechanisms, and anser a
question posed in \cite{AlkemperHutzenthaler}. Finally we discuss connections to the Lloyd-Sudbury approach, 
\cite{SudburyLloyd, Swart06}.}

For a Markov process $(X_t)_{t\geq 0}$ we write $\P_x$ for the law of the process started in $x,$ and $\E_x$ for the corresponding expectation.
Two Markov processes $(X_t)_{t\geq 0},(Y_t)_{t\geq 0}$ with state spaces $E$ and $F,$ respectively, are called \emph{dual} with respect to the \emph{duality function} 
$H:E\times F\to \R$ if for all $t\geq 0$, $x\in E$, $y\in F$ the equality
\begin{equation}\label{eq:duality}
\E_x[H(X_t,y)]=\E_y[H(x,Y_t)]
\end{equation}
holds. This means that the long-term behaviour of one process is -- to some extent -- determined by the long-term behaviour of the other process. The usefulness of a duality
depends on the duality function $H.$ If for example $(X_t)$ takes values in $\R,$ and $(Y_t)$ in $\N,$ we call a duality with respect to the function
$$H(x,y)=x^y$$
a \emph{moment duality}, since it determines all the moments of $X_t.$ 
For practical purposes, it is often useful to have a \emph{pathwise} construction of the dual processes, which means that they can be constructed on the same probability 
space in some explicit way, for example as functions of one underlying driving process. In the case of interacting particle systems, this construction is usually provided 
by the \emph{graphical representation}, \cite{Harris, Griffeath}. We explain this below in the setup that we use for the present paper.

Let $N\in\N,$ and let $E_N:=\{0,1\}^N.$ We write $x\in E_N$ as a vector $x=(x_i)_{1\leq i\leq N}.$ A partial order on $E_N$ is given by 
$x\leq y\Leftrightarrow x_i\leq y_i\;\forall 1\leq i\leq N.$ We write $x\wedge y$ for the minimum of $x$ and $y$ with respect to this ordering. Let $(X^N_t)_{t\geq 0}$ and 
$(Y_t^N)_{t\geq 0}$ denote Markov processes defined on some probability space $(\Omega, \mathcal F,\P)$ with values in $E_N$, $X_t^N=(X_t^N(i))_{i=1,...,N}.$ Let 
$A_t^N:=\{i: X_t^N(i)=1 \}$ and $B_t^N:=\{i: Y_t^N(i)=1\}$; this defines processes taking values in the subsets of $\{1,...,N\}.$ We write $|X_t^N|:=|A_t^N|$ for 
the cardinality of the set $A_t^N,$ that is for the number of $1$'s. Sudbury and Lloyd \cite{SudburyLloyd} argue that in this context, duality functions that are 
functions of $A \cap B$ alone  should be of the form
$$H(A,B)=q^{|A\cap B|},\quad A,B\subset \{1,...,N\},$$
for some $q\in \R\setminus\{1\}.$ We take this as a motivation to say that two $E_N-$valued Markov processes $(X_t^N), (Y_t^N)$ are \emph{$q-$dual} if
\begin{equation}\label{eq:q-dual}
\E_x[q^{|X_t^N\wedge Y_0^N|}]=\E_y[q^{|X_0^N\wedge Y_t^N|}]\;\;\,\,\,\forall x,y\in E_N,\ t\geq 0.
\end{equation}
That is, the duality function is $H(x,y)=q^{|x\wedge y|}.$ Special cases are $q=0,$ which is called \emph{coalescing duality}, and $q=-1$, which is called 
\emph{annihilating duality}. In these cases  the duality function becomes $0^{|x\wedge y|}=1_{\{x\wedge y=0\}},$ and $(-1)^{|x\wedge y|}=1-2\times 1_{\{|x\wedge y| 
\mbox{ is odd}\}},$ respectively.

We now describe the graphical representation for such dualities.
For each $i\in\{1,...,N\},$ draw a 
vertical line of length $T,$ which represents time up to a finite end point $T.$ We consider ordered pairs $(i,j)$ with $i,j\in\{1,...,N\}.$ For each such pair, run $m\in\N$ 
independent Poisson processes with parameters $(\lambda_{ij}^k), k=1,...,m.$ At the time of an arrival draw an arrow from the line corresponding to $i$ to the line corresponding 
to $j,$ marked with the index $k$ of the process. Do this independently for each ordered pair, for each $k=1,...,m.$ For each $k,$ we define functions 
$f^k,g^k:\{0,1\}^2\to\{0,1\}^2.$ A Markov process $(X_t^N)$ with c{\`a}dl{\`a}g paths is then constructed by specifying an initial condition $x=(x_i)_{i=1,...,N},$ and the
following dynamics: $X_t^N=x$ until the time of the first arrow in the graphical representation. If this arrow points from $i$ to $j$ and is labelled $k,$ then the pair $(x_i,x_j)$ 
is changed to $f^k(x_i,x_j),$ and the other coordinates remain unchanged. Go on until the next arrow, and proceed exactly in the same way. The dual process $(Y_t^N)$ is 
constructed using the same Poisson processes, but started at the final time $T>0,$ running time backwards, inverting the order of all arrows, and using the functions $g^k$ 
instead of $f^k.$\\
\noemi{This kind of construction goes back to Harris \cite{Harris} and is of
widespread use. A detailed account can be found in Griffeath \cite{Griffeath} and Liggett \cite{Liggett}. In the representations considered there, 
the interpretation of the mechanisms $f^k, g^k$ is such that one thinks of a particle at the tail of an arrow in the graphical representation having some 
effect on the configuration at the tip, for example by jumping there, or by branching, and subsequent coalescence, or annihilation, or death. 
The rates of the Poisson processes then naturally have the interpretation of giving a rate \emph{per particle} for some event to happen. In our case, the functions $f^k, g^k$ are
considered to act on
\emph{pairs of sites} with a certain rate, whether or not the sites are occupied. Note that given a process, the graphical representation is of course not unique, since 
different mechanisms can be combined to have the same effect.}

Following \cite{AlkemperHutzenthaler}, we call the functions $f^k,g^k$ \emph{basic mechanisms}, and we generalize the definition of dual basic mechanisms given by Alkemper 
and Hutzenthaler. For $x=(x_1,x_2)\in\{0,1\}^2$ we use the notation $x^\dagger:=(x_2,x_1)$; the dagger accounts for the reversal of an arrow.
\begin{definition}\label{def:dual_mechanism}
Two basic mechanisms $f,g:\{0,1\}^2\to\{0,1\}^2$ are called \emph{$q$-dual mechanisms} if and only if
\begin{equation}
\label{eq:dual_mechanism}
q^{|x\wedge (g(y^\dagger))^\dagger|}=q^{|f(x)\wedge y|}\,\,\,\,\forall x,y\in\{0,1\}^2.
\end{equation}
\end{definition}
It is \noemi{easy to see, cf. Lemma \ref{lem:dual_mech_proc}}, that two processes constructed using $q-$dual mechanisms are $q-$dual processes. 
\begin{center}
\setlength{\unitlength}{1cm}
\begin{picture}(7,5)
\put(1.5,1.5){\line(0,1){3}}
\put(5.5, 1.5){\line(0,1){3}}
\put(1.4,1){$x_1$}
\put(1.4, 4.7){$y_1$}
\put(5.4,1){$x_2$}
\put(5.4, 4.7){$y_2$}
\put(1.5,3.6){\vector(1,0){4}}
\multiput(1.7, 3.4)(0.3,0){13}{\line(1,0){0.2}}
\put(1.7, 3.4){\vector(-1,0){0.2}}
\put(0,3.8){$(f(x))_1$}
\put(5.6,3.8){$(f(x))_2$}
\put(0,3.0){$(g(y^\dagger))_2$}
\put(5.6, 3.0){$(g(y^\dagger))_1$}
\put(2.7,0.3){\scshape{Figure 1}}
\end{picture}
\end{center}
\begin{lemma}\label{lem:dual_mech_proc}
Fix $m\in\N$, $q\in\R\setminus\{1\}$ and $T>0$. For every $k=1,...,m$, let $f^k, g^k$ be $q-$dual basic mechanisms. \noemi{Consider
independent Poisson processes with parameters 
$\lambda^k_{ij},$ $\mu^k_{ij}, k=1,...,m, i,j\in E_N,$ which satisfy $\lambda_{ij}^k=\mu^k_{ji}$ for all $k,i,j.$ Let $X^N_0$ and 
$Y^N_0$ be $E_N-$valued random variables, independent of one another and of the Poisson processes.}
Let $(X^N_t), (Y_t^N)$ be Markov processes with state 
space $E_N,$ initial conditions $X_0^N, Y_0^N,$ constructed using the mechanisms $f_k$ and $g_k$,
respectively \noemi{driven by the Poisson processes.} Then there exists a process $(\hat{Y}_t^N)$ such that
\begin{equation} \label{eq:dual_mech_proc}
	\hat{Y}_t^N\stackrel{d}{=}Y_t^N\,\,\,\mbox{
and }\,\,\,q^{|{X}_T\wedge \hat{Y}_0|}=q^{|{X}_{t}\wedge \hat{Y}_{T-t}|}=q^{|X_0\wedge \hat{Y}_{T}|}\,\,\,\,a.s.\,\,\forall \,0\leq t\leq T.
\end{equation}
\end{lemma}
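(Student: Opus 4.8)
The plan is to realise both processes on the single graphical representation driven by the Poisson processes with parameters $\lambda^k_{ij}$, to recognise the backward--running dual built on it as a copy of $Y^N$, and then to check that $t\mapsto|X_t\wedge\hat Y_{T-t}|$ changes value neither between arrows nor at an arrow. Fix a realisation of the Poisson processes; on $[0,T]$ there are a.s.\ only finitely many arrows, at distinct times $0<t_1<\dots<t_n<T$, the $\ell$-th pointing from $i_\ell$ to $j_\ell$ with label $k_\ell$. Build $X=X^N$ forward and c\`adl\`ag from $X_0^N$: at $t_\ell$ replace $(X(i_\ell),X(j_\ell))$ by $f^{k_\ell}(X(i_\ell),X(j_\ell))$ and leave the other coordinates alone. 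Build $\hat Y$ in ``dual time'' $s\in[0,T]$: set $\hat Y_0:=Y_0^N$, and for each $\ell$ place a dual arrow from $j_\ell$ to $i_\ell$ at dual time $T-t_\ell$; at that time replace $(\hat Y(i_\ell),\hat Y(j_\ell))$ by $\bigl(g^{k_\ell}\bigl((\hat Y(i_\ell),\hat Y(j_\ell))^\dagger\bigr)\bigr)^\dagger$, the two daggers accounting for the reversal of the arrow, exactly as in Figure~1.

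\emph{Step 1: the law of $\hat Y$.} Time reversal $s\mapsto T-s$ sends a rate-$\lambda$ Poisson process on $[0,T]$ to a rate-$\lambda$ Poisson process on $[0,T]$ and preserves independence; once each arrow is reversed, an arrow from $j$ to $i$ with label $k$ occurs at rate $\lambda^k_{ij}=\mu^k_{ji}$ by hypothesis. Hence $\hat Y$ is produced by exactly the recipe defining $Y^N$ --- an independent family of Poisson processes with parameters $\mu^k_{ij}$, the mechanisms $g^k$, and the initial condition $Y_0^N$, which is independent of everything --- so $\hat Y\stackrel{d}{=}Y^N$, and in particular $\hat Y_t\stackrel{d}{=}Y_t^N$.

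\emph{Step 2: the exponent is constant.} Put $\phi(t):=q^{|X_t\wedge\hat Y_{T-t}|}$. The arrows of $\hat Y$ sit at dual times $T-t_\ell$, so $t\mapsto\hat Y_{T-t}$ changes coordinates only at $t\in\{t_1,\dots,t_n\}$, and so does $t\mapsto X_t$; hence $\phi$ is constant on $(0,t_1)$, on each $(t_\ell,t_{\ell+1})$ and on $(t_n,T)$. At $t_\ell$ split $|X_t\wedge\hat Y_{T-t}|$ into the contribution of the sites $i_\ell,j_\ell$ and that of the remaining sites; the arrow leaves the latter unchanged. For the former put $x:=(X_{t_\ell^-}(i_\ell),X_{t_\ell^-}(j_\ell))$ and $y:=(\hat Y_{(T-t_\ell)^-}(i_\ell),\hat Y_{(T-t_\ell)^-}(j_\ell))$. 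Just to the left of $t_\ell$, $X$ still equals $x$ on $\{i_\ell,j_\ell\}$ while $\hat Y$ has already been updated to $(g^{k_\ell}(y^\dagger))^\dagger$; just to the right, $X$ has been updated to $f^{k_\ell}(x)$ while $\hat Y$ still equals $y$. By Definition~\ref{def:dual_mechanism}, $q^{|x\wedge(g^{k_\ell}(y^\dagger))^\dagger|}=q^{|f^{k_\ell}(x)\wedge y|}$, so $\phi$ has matching one-sided limits at $t_\ell$. Thus $\phi$ is a.s.\ constant on $[0,T]$, and its values at $t=0$ and $t=T$ give the chain of equalities in \eqref{eq:dual_mech_proc}; together with Step~1 this proves the lemma.

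\emph{The delicate point.} The only place needing care is the value of $\phi$ \emph{at} an arrow time $t_\ell$: with the usual c\`adl\`ag conventions both $X_{t_\ell}$ and $\hat Y_{T-t_\ell}$ are post-arrow, and \eqref{eq:dual_mechanism} does not relate $q^{|f^{k_\ell}(x)\wedge(g^{k_\ell}(y^\dagger))^\dagger|}$ to the common one-sided value (it can genuinely differ, as a pure ``swap'' mechanism shows). This concerns a $t$-null set and is harmless: one may take $\hat Y$ left-continuous in dual time --- which alters no finite-dimensional distribution, so Step~1 still goes through --- or simply note that for each fixed $t$ one has $t\notin\{t_1,\dots,t_n\}$ almost surely, which is all the applications require. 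I expect the bookkeeping at the arrows --- keeping straight which of $X,\hat Y$ is pre- and which post-arrow on either side of $t_\ell$, and matching the two configurations to the two sides of \eqref{eq:dual_mechanism} after the arrow reversal --- to be the only genuinely fiddly part of the argument; the rest is routine.
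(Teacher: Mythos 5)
Your proof is correct and follows essentially the same route as the paper's: construct $\hat Y^N$ on the same realisation of the Poisson arrows with time and arrows reversed (using $\lambda^k_{ij}=\mu^k_{ji}$ for the distributional identification), and then verify that $q^{|X_t\wedge\hat Y_{T-t}|}$ is locally constant between arrows and has matching one-sided limits across each arrow by Definition~\ref{def:dual_mechanism}, exactly as indicated by Figure~1. Your extra remark about the c\`adl\`ag convention at the arrow times is a legitimate refinement of a point the paper leaves implicit, but it does not change the argument.
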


\begin{proof}
Since we assume \noemi{ $\lambda_{ij}^k=\mu^k_{ji}$,} we can construct $\hat{Y}^N_t$ from the graphical representation of $(X_t^N),$ using the same realization 
of the Poisson processes, reversing time and the directions of all the arrows. It is clear from the construction that
then $\hat{Y}_t^N\stackrel{d}{=}Y_t^N.$ \noeminew{Assume there is an arrow from $i$ to $j$ at time $t$ in the graphical representation,
and let $x:=(X_{t-}^N(i),X_{t-}^N(j)), y:= (\hat{Y}^N_{(T-t)-}(i), \hat{Y}^N_{(T-t)-}(j)).$ Then we have
$|X^N_{t-}\wedge\hat{Y}^N_{(T-t)+}|-|x\wedge(g^k(y^\dagger))^\dagger|=|X^N_{t+}\wedge \hat{Y}^N_{(T-t)-}| - |f^k(x)\wedge y|,$
and therefore}
$q^{|{X}^N_{t-}\wedge \hat{Y}^N_{(T-t)+}|}=q^{|X^N_{t+}\wedge \hat{Y}^N_{(T-t)-}|}$ holds (see Figure 1). For some more details, in the case of 
coalescing mechanisms, compare the proof of Proposition 
2.3 of \cite{AlkemperHutzenthaler}.
\end{proof}

Taking expectations, the following is then obvious.

\begin{corollary}
In the situation of Lemma \ref{lem:dual_mech_proc}, the processes $(X_t^N)$ and $(Y_t^N)$ are $q-$dual.
\end{corollary}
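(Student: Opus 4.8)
The plan is to derive the corollary directly from Lemma~\ref{lem:dual_mech_proc} by integrating the almost-sure pathwise identity. First I would fix $q\in\R\setminus\{1\}$, $T>0$, and $q$-dual basic mechanisms $f^k,g^k$ as in the hypotheses of the lemma. Given arbitrary $x,y\in E_N$, I would apply Lemma~\ref{lem:dual_mech_proc} with the deterministic initial conditions $X_0^N=x$ and $Y_0^N=y$, which are trivially independent of one another and of the driving Poisson processes. The lemma then furnishes, on a common probability space, processes $(X_t^N)$ and $(\hat Y_t^N)$ with $\hat Y_t^N\stackrel{d}{=}Y_t^N$ for every $t$, and with the almost-sure identity $q^{|X_T^N\wedge\hat Y_0^N|}=q^{|X_0^N\wedge\hat Y_T^N|}$.

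Next I would take expectations on both sides of this identity. Since the duality function $(A,B)\mapsto q^{|A\wedge B|}$ takes finitely many values (the exponents range over $\{0,1,\dots,N\}$), the random variables involved are bounded, so the expectations are well defined and exchanging expectation and the pathwise identity is immediate. On the left, $X_T^N$ starts from $x$ and $\hat Y_0^N=y$ is deterministic, so $\E[q^{|X_T^N\wedge\hat Y_0^N|}]=\E_x[q^{|X_T^N\wedge Y_0^N|}]$. On the right, $X_0^N=x$ is deterministic and $\hat Y_T^N\stackrel{d}{=}Y_T^N$ with $Y_0^N=y$, so $\E[q^{|X_0^N\wedge\hat Y_T^N|}]=\E_y[q^{|X_0^N\wedge Y_T^N|}]$. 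Equating the two gives exactly \eqref{eq:q-dual} at time $T$.

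Finally, since $T>0$ was arbitrary, the identity \eqref{eq:q-dual} holds for all $t\geq 0$ (the case $t=0$ being trivial), and for all $x,y\in E_N$ since these were arbitrary; this is precisely the assertion that $(X_t^N)$ and $(Y_t^N)$ are $q$-dual. There is essentially no obstacle here: the entire content has been pushed into Lemma~\ref{lem:dual_mech_proc}, and the only small point requiring care is the bookkeeping that replaces $\hat Y^N$ by $Y^N$ under expectation, which is legitimate because only the one-dimensional marginals of $\hat Y^N$ at times $0$ and $T$ enter, and these agree in distribution with those of $Y^N$ by construction.
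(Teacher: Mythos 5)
Your proof is correct and is exactly the paper's argument: the paper disposes of this corollary with the single line ``Taking expectations, the following is then obvious,'' i.e.\ one integrates the almost-sure identity from Lemma~\ref{lem:dual_mech_proc} and uses $\hat{Y}_t^N\stackrel{d}{=}Y_t^N$. Your additional bookkeeping (deterministic initial conditions, boundedness of the duality function, arbitrariness of $T$) just makes explicit what the paper leaves implicit.
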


\begin{remark}
We note that Lemma \ref{lem:dual_mech_proc} tells us that $(X_t^N)$ and $(Y_t^N)$ are dual in a very strong sense, namely, \noemi{for fixed $T>0,$ the equation 
$H(X_t^N,\hat{Y}^N_0)=H(X^N_0,\hat{Y}_t^N)$ holds almost surely for all $0\leq t\leq T$} instead of just in expectation. We call such processes \emph{strongly pathwise dual}. We have just 
seen that a construction via graphical representation and $q$-dual basic mechanisms automatically leads to a strong pathwise duality.
\noemi{Another example for strong pathwise duality obtained from a graphical representation is given in 
\cite{CliffordSudbury}, where stochastically monotone processes on totally ordered spaces were shown to be dual with respect to the 
duality function $1_{\{x\leq y\}}$ in a pathwise sense.}
\end{remark}

We are now ready to state and prove the main result of this article. 
\noeminew{Here, we are interested in one-dimensional processes which may be obtained by rescalings of
$|X_t^N|, |Y_t^N|$ where $X_t^N$ and $Y_t^N$ are $q-$dual finite interacting particle systems. 
Therefore, we require the particle processes to be exchangeable at all times, which means that given $|X^N_t|,$ all configurations 
$X^N_t$ with $|X^N_t|$ ones are equally likely.}

\begin{theorem}
\label{thm:prototype_general}
Let $(X^N_t)$, $(Y^N_t)$ be Markov processes with state space $E_N$ that 
are $q_N-$dual for some $q_N\in[-1,1)$.  
Choose exchangeable initial conditions $X_0^N, Y_0^N\in E_N$ \noemi{independent of one another}, fixing $|X_0^N|=k_N, |Y_0^N|=n_N$, and suppose that $X_t^N$ and $Y_t^N$ stay exchangeable for all $t>0$.
Assume that $n_N/N\to 0$ and $\E[|Y^N_{t_N}|/N]\to 0$ as $N\to \infty$, for some time scale $t_N\geq 0$.  
Then
$$\lim_{N\to\infty}\E\left[\left(1+(q_N-1)\frac{|X_0^N|}{N}\right)^{|Y^N_{t_N}|}\right]=
\lim_{N\to\infty}\E\left[\left(1+(q_N-1)\frac{|X_{t_N}^N|}{N}\right)^{|Y^N_{0}|}\right],$$
provided that the limits exist.
\end{theorem}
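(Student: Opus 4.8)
The plan is to use exchangeability to collapse the $q_N$-duality into a single scalar identity for a deterministic function, and then to show that this function is uniformly close to the rescaled moment function in the statement.

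\emph{Step 1: an exact identity.} For $a,b\in\{0,\dots,N\}$ put $\psi_N(a,b):=\E[q_N^{|A\cap B|}]$, where $A,B$ are independent uniformly distributed subsets of $\{1,\dots,N\}$ with $|A|=a$, $|B|=b$. An exchangeable random subset with prescribed cardinality is uniform on the corresponding size class (the symmetric group acts transitively there), so the hypotheses say exactly that, conditionally on its cardinality, each of $X_0^N$, $Y_0^N$, $X^N_{t_N}$, $Y^N_{t_N}$ is uniform on its size class. Integrating \eqref{eq:q-dual} over the two independent initial conditions, then conditioning on $|X^N_{t_N}|$ on the left-hand side and on $|Y^N_{t_N}|$ on the right-hand side and using the tower property, one obtains
\[
\E\bigl[\psi_N\bigl(|X^N_{t_N}|,n_N\bigr)\bigr]=\E\bigl[\psi_N\bigl(k_N,|Y^N_{t_N}|\bigr)\bigr].
\]
Expanding $q_N^{|A\cap B|}=\sum_{j\geq0}\binom{|A\cap B|}{j}(q_N-1)^j$ and using $\E\binom{|A\cap B|}{j}=\binom bj(a)_j/(N)_j$ (with $(x)_j$ the falling factorial) gives the closed form $\psi_N(a,b)=\sum_{j=0}^{a\wedge b}\binom bj(q_N-1)^j(a)_j/(N)_j$. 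On the other hand, the function occurring in the theorem is $T_N(a,b):=\bigl(1+(q_N-1)a/N\bigr)^b=\sum_{j=0}^b\binom bj(q_N-1)^j(a/N)^j$, the ``sampling with replacement'' analogue of $\psi_N$; and the two sides of the theorem are precisely $\E[T_N(|X^N_{t_N}|,n_N)]$ and $\E[T_N(k_N,|Y^N_{t_N}|)]$.

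\emph{Step 2 (the crux): comparison of $\psi_N$ and $T_N$.} I would establish an estimate $|\psi_N(a,b)-T_N(a,b)|\leq\varepsilon_N(a,b)$, valid for all $a\in\{0,\dots,N\}$, $b\leq N/2$ and $q_N\in[-1,1)$, with $\varepsilon_N$ small enough that $\varepsilon_N(a,n_N)\to0$ uniformly in $a$ when $n_N/N\to0$, and $\E[\varepsilon_N(k_N,|Y^N_{t_N}|)]\to0$ under $\E[|Y^N_{t_N}|/N]\to0$. For $q_N=0$ — the situation of Alkemper and Hutzenthaler \cite{AlkemperHutzenthaler} — this is clean: $\psi_N(a,b)=\prod_{\ell=0}^{b-1}\frac{N-a-\ell}{N-\ell}$, $T_N(a,b)=(1-a/N)^b$, and a term-by-term telescoping of the two products combined with the elementary inequality $\frac aN\bigl(1-\frac aN\bigr)^{b-1}\leq 1/b$ yields $|\psi_N(a,b)-T_N(a,b)|\leq b/\bigl(2(N-b)\bigr)$, uniformly in $a$. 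For general $q_N\in[-1,1)$ I would argue from the binomial formulas, or equivalently by swapping the without-replacement conditional probabilities $\tfrac{a-S_{\ell-1}}{N-\ell+1}$ for $a/N$ one factor at a time (the hypergeometric ``central moments'' $\E[\prod_{\ell\leq s}(Z_\ell-a/N)]$ being controlled by successive conditioning), always using that $|q_N|\leq1$, so that every factor $1+(q_N-1)p$ with $p\in[0,1]$, in particular $\rho_0:=1+(q_N-1)a/N$, lies in $[-1,1]$. The delicate point is uniformity in $a$: a crude expansion loses a factor of $b$. One recovers it by noting that the apparently dangerous regime where $\rho_0$ is near $0$ is in fact harmless, since there $T_N(a,b)=\rho_0^b$, and likewise $\psi_N(a,b)$ (with $|A\cap B|$ then typically of order $b$), decay geometrically in $b$, which absorbs any polynomial loss; and $|\rho_0|$ is close to $1$ only when $a$ is close to $0$ or to $N$, where $\psi_N$ and $T_N$ nearly coincide for elementary reasons.

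\emph{Step 3: assembling.} Granting Step 2, since $n_N/N\to0$ we get $\E[T_N(|X^N_{t_N}|,n_N)]-\E[\psi_N(|X^N_{t_N}|,n_N)]\to0$. On the other side the second argument $|Y^N_{t_N}|$ is random; splitting on $\{|Y^N_{t_N}|\leq N/2\}$ and its complement, and on the complement using the trivial bound $|\psi_N-T_N|\leq2$ together with Markov's inequality $\P(|Y^N_{t_N}|\geq N/2)\leq2\E[|Y^N_{t_N}|]/N$, one gets $\E[\psi_N(k_N,|Y^N_{t_N}|)]-\E[T_N(k_N,|Y^N_{t_N}|)]\to0$ from $\E[|Y^N_{t_N}|/N]\to0$. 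Chaining these with the identity of Step 1 gives $\E[T_N(|X^N_{t_N}|,n_N)]=\E[T_N(k_N,|Y^N_{t_N}|)]+o(1)$, and passing to the limit (which is assumed to exist) proves the theorem. The main obstacle is the uniform-in-$a$ estimate of Step 2; Steps 1 and 3 are bookkeeping with exchangeability, the tower property and Markov's inequality.
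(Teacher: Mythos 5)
Your proposal follows essentially the same route as the paper: both reduce the statement, via independence and exchangeability, to comparing the hypergeometric generating function $\psi_N(a,b)=\E[q_N^{|A\cap B|}]$ with its binomial (with-replacement) analogue $T_N(a,b)=(1+(q_N-1)a/N)^b$, and both then average over the initial conditions and invoke the duality. Steps 1 and 3 are fine. The only real difference is that your Step 2 -- which you rightly identify as the crux and leave as a sketch, with a somewhat delicate case analysis around $\rho_0\approx 0$ -- is closed in the paper by a single citation: Theorem 4 of \cite{DiaconisFreedman} gives $\|\mathrm{Hyp}(N,a,b)-\mathrm{Bin}(b,a/N)\|_{\mathrm{TV}}\leq 4b/N$ \emph{uniformly in} $a$, and since $|q_N^k|\leq 1$ for $q_N\in[-1,1)$ this immediately yields $|\psi_N(a,b)-T_N(a,b)|\leq Cb/N$ with no loss of a factor of $b$ and no need to treat the regimes where $\rho_0$ is near $0$ or $\pm1$ separately. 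This uniform, linear-in-$b$ bound also simplifies your Step 3: you can take expectations over the random $|Y^N_{t_N}|$ directly, getting an error of order $\E[|Y^N_{t_N}|]/N\to 0$, without the split on $\{|Y^N_{t_N}|\leq N/2\}$ or Markov's inequality. So the architecture of your argument is correct and matches the paper's; to make it complete you should either import the total variation estimate or actually carry out the uniform comparison you sketch, which as written is not yet a proof.
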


Theorem \ref{thm:prototype_general} applies, for example, to processes constructed from basic mechanisms with rates $\lambda_{ij}^k$
that do not depend on $i$ and $j,$ \noeminew{in that case, exchangeability of the initial conditions implies that the processes stay exchangeable 
at all times. All our later examples fall into this class. This condition is however not necessary, as can be seen by considering the lookdown construction \cite{DonnellyKurtz}.}

Depending on the scaling, Theorem \ref{thm:prototype_general} may lead to a moment duality, if $\frac{|X_t^N|}{N}\to X_t,$ and $|Y_t^N|\to Y_t,$ as we then get 
$\E[(1+(q-1) X_0)^{Y_t}]=\E[(1\noemi{+}(q-1)X_t)^{Y_0}].$ If $X^N$ and $Y^N$ have the same scaling, we may get a Laplace duality, 
that is $H(x,y)=e^{-\lambda x y}$ for some $\lambda\in\R,$ see \noeminew{Theorem 4.3 of} \cite{AlkemperHutzenthaler} for an example.

\begin{proof}
The proof relies on the simple fact that, \noemi{by independence and exchangeability,} the distribution of $|X\wedge Y|$ given $|X|$ and $|Y|$ is approximately 
binomial with parameters $|Y|$ and $\frac{|X|}{N},$
provided that $|Y|$ is small with respect to $N.$ Indeed, $|X\wedge Y|$ follows a hypergeometric distribution, since it is obtained by distributing the $|Y|$ 1's of the 
$Y$-configuration onto the $|X|$ 1's of the $X$-configuration, without hitting the same 1 twice. Approximating the hypergeometric distribution by a binomial distribution 
will give us the result. Let $Z^N\sim\mathrm{Bin}\left(n_N,\frac{x_N}{N}\right)$ with $x_N\in\{0,...,N\}$ and $n_N/N\to 0.$ By Theorem 4 of \cite{DiaconisFreedman}, we can 
bound the total variation distance between the hypergeometric and the binomial distribution as
$$\|\mbox{Hyp}(N, x_N, n_N)-\mbox{Bin}(n_N, \frac{x_N}{N})\|_{\mbox{TV}}\leq \frac{4 n_N}{N}.$$
Since we assumed $q_N\in[-1,1),$ we obtain
\begin{equation*}
\begin{split}
\E\left[q_N^{|X_{t_N}^N\wedge Y_0|}\;\big|\;|X_{t_N}^N|=x_N, |Y_0^N|=n_N\right]=& \sum_{k=0}^{n_N}q_N^k\P(|X_t^N\wedge Y_0|=k\;\mid\;|X_t^N|=x_N, |Y_0^N|=n_N)\\
=&\E\left[q_N^{Z^N}\right]+o(1),
\end{split}
\end{equation*}
where $\E[q_N^{Z^N}]$ is just the probability generating function of the binomial variable $Z^N.$ This is well known to be 
$$\E\left[q_N^{Z^N}\right]
=\left(q_N\frac{x_N}{N}+\left(1-\frac{x_N}{N}\right)\right)^{n_N}=\left(1+(q_N-1)\frac{x_N}{N}\right)^{n_N}.$$
Averaging over the initial conditions $X_0^N$ with $|X_0^N|=k_N,$ we obtain
$$\E\left[q_N^{|X_t^N\wedge Y_0^N|}\right]=\E\left[\left(1+(q_N-1)\frac{|X_{t_N}^N|}{N}\right)^{|Y_0^N|}\right]+o(1).$$
In the same way, using $\E[|Y^N_t|]/N\to 0,$ we get
$$\E\left[q_N^{|X_0^N\wedge Y_{t_N}^N|}\right]=\E\left[\left(1+(q_N-1)\frac{|X_{0}^N|}{N}\right)^{|Y_{t_N}^N|}\right]+o(1).$$
By duality, 
$$\E\left[\left(1+(q_N-1))\frac{|X_{t_N}^N|}{N}\right)^{|Y_0^N|}\right] =\E\left[\left(1+(q_N-1)\frac{|X_{0}^N|}{N}\right)^{|Y_{t_N}^N|}\right]+o(1).$$
Letting $N\to\infty$ gives the desired result.
\end{proof}

For the binomial approximation, it was necessary to assume that $Y^N/N\to 0.$ We now give a result for the case that both $X_t^N$ and $Y_t^N$ scale with $N.$ \noeminew{This leads
to a Laplace duality in a situation that was not covered in \ref{thm:prototype_general}.}

\begin{proposition}\label{prop:Laplace_scaling}
Let $(X^N_t),(Y^N_t)$ be Markov processes with state space $E_N$  that 
 are $q_N$-dual for some $q_N$ such that $\lim_{N\to\infty} N(q_N-1)=-\lambda\in(-\infty,0].$ 
Choose exchangeable initial conditions $X_0^N, Y_0^N\in E_N$ \noemi{independent of each other}
and suppose that both processes stay exchangeable at $t>0$.  Assume that the process $\frac{|Y^N_t|}{N}$ 
converges weakly to some process $\tilde{Y_t}$, that $\frac{|X^N_{t}|}{N}$ converges weakly to $\tilde{X}_t$. Then $(\tilde{X}_t)$ and $(\tilde{Y}_t)$ are dual with respect to 
$$H(x,y)=e^{-\lambda xy}.$$
\end{proposition}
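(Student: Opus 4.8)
The plan is to start from the $q_N$-duality \eqref{eq:q-dual} for the finite particle systems and pass to the limit in both sides simultaneously, using the scaling hypothesis $N(q_N-1)\to -\lambda$ together with the weak convergence of $\tfrac{|X^N_t|}{N}$ and $\tfrac{|Y^N_t|}{N}$. The key algebraic observation is the same one used in the proof of Theorem \ref{thm:prototype_general}: conditionally on $|X^N_t|$ and $|Y^N_t|$, the overlap $|X^N_t\wedge Y^N_t|$ is hypergeometric, and its conditional probability generating function evaluated at $q_N$ is exactly $\bigl(1+(q_N-1)\tfrac{|X^N_t|}{N}\bigr)^{|Y^N_t|}$ when the two configurations are independent — but here I do not want to throw away the hypergeometric correction via Diaconis–Freedman, because now $|Y^N_t|$ is of order $N$. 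Instead I would argue directly that, for fixed $t$, writing $x_N = \tfrac{|X^N_t|}{N}$ and $y_N=\tfrac{|Y^N_t|}{N}$,
\begin{equation*}
\E\Bigl[q_N^{|X^N_t\wedge Y^N_0|}\Bigr] = \E\Bigl[\psi_N\bigl(x_N,\,|Y^N_0|\bigr)\Bigr],
\end{equation*}
where $\psi_N(a,m)$ is the hypergeometric pgf at $q_N$, and then show $\psi_N(x_N, Nz_N)\to e^{-\lambda xz}$ whenever $x_N\to x$ and $z_N\to z$. For the independent hypergeometric, $\psi_N(x_N,m) = \prod_{j=0}^{m-1}\bigl(1-(1-q_N)\tfrac{N x_N - j}{N-j}\bigr)$ or, more conveniently, one can bound it between the two binomial pgf's with success parameters $\tfrac{Nx_N - m}{N-m}$ and $\tfrac{Nx_N}{N}$ (the standard hypergeometric-vs-binomial sandwich), both of which are of the form $\bigl(1+(q_N-1)p_N\bigr)^{m}$ with $p_N\to x$ and $m = Nz_N$; since $N(q_N-1)\to -\lambda$, each such expression converges to $e^{-\lambda x z}$.

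Concretely, the steps in order: (i) fix $t\ge 0$, $\lambda_0,\mu_0\ge 0$ dummy variables, and rewrite both sides of \eqref{eq:q-dual} using the hypergeometric conditional law of the overlap; (ii) prove the pointwise convergence lemma: if $p_N\in[0,1]$, $p_N\to p$, $m_N/N\to z$, and $N(q_N-1)\to-\lambda$, then $\bigl(1+(q_N-1)p_N\bigr)^{m_N}\to e^{-\lambda p z}$, and deduce via the binomial sandwich that the hypergeometric pgf $\psi_N(x_N, m_N)$ has the same limit when $x_N\to p$; (iii) combine with the weak convergence of $\tfrac{|X^N_t|}{N}\Rightarrow \tilde X_t$ and $\tfrac{|Y^N_0|}{N}\Rightarrow\tilde Y_0$ (and the analogous statements with $0$ and $t$ swapped) and dominated convergence — using $q_N\in$ a bounded interval eventually, so that all the expressions are bounded by $1$ in absolute value, which legitimizes the interchange of limit and expectation — to conclude
\begin{equation*}
\lim_{N\to\infty}\E\Bigl[q_N^{|X^N_t\wedge Y^N_0|}\Bigr] = \E\bigl[e^{-\lambda \tilde X_t \tilde Y_0}\bigr],
\qquad
\lim_{N\to\infty}\E\Bigl[q_N^{|X^N_0\wedge Y^N_t|}\Bigr] = \E\bigl[e^{-\lambda \tilde X_0 \tilde Y_t}\bigr];
\end{equation*}
(iv) equate the two limits using \eqref{eq:q-dual}, which gives exactly $\E_x[e^{-\lambda \tilde X_t y}] = \E_y[e^{-\lambda x \tilde Y_t}]$, i.e. the claimed duality with $H(x,y)=e^{-\lambda xy}$, once we localize on the initial values $\tilde X_0 = x$, $\tilde Y_0 = y$ (which is possible since the initial conditions $X^N_0, Y^N_0$ were chosen with prescribed — hence deterministic in the limit — densities).

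The main obstacle is step (ii)–(iii): controlling the hypergeometric pgf uniformly enough to pass to the limit. In Theorem \ref{thm:prototype_general} one could afford to replace hypergeometric by binomial at cost $O(n_N/N)\to 0$, but here $|Y^N|\asymp N$ so that crude bound is useless; one genuinely needs the two-sided binomial sandwich for the hypergeometric (or an explicit product estimate) plus the elementary fact that $(1 - \tfrac{\lambda p}{N} + o(1/N))^{zN}\to e^{-\lambda p z}$, together with a domination argument to swap limit and expectation against the weak convergence. A secondary point to be careful about is that weak convergence of $\tfrac{|X^N_t|}{N}$ alone does not control the joint law of $(\tfrac{|X^N_t|}{N},|Y^N_0|)$ — but this is handled by the independence of the two processes (the initial conditions are independent and they are driven, in the intended applications, by independent graphical data), so the joint law factorizes and the product $\tilde X_t\tilde Y_0$ in the exponent makes sense as a product of independent limits; I would state this independence explicitly as part of the hypotheses being used from Lemma \ref{lem:dual_mech_proc}.
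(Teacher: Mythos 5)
Your overall architecture (condition on the counts, study the law of the overlap, pass to the limit by boundedness, then invoke the $q_N$-duality) matches the paper's, but the specific device you propose for the key step (ii) does not work, and this is a genuine gap. The ``binomial sandwich'' bounds the hypergeometric pgf between $\bigl(1+(q_N-1)\tfrac{Nx_N-m}{N-m}\bigr)^{m}$ and $\bigl(1+(q_N-1)\tfrac{Nx_N}{N}\bigr)^{m}$, and you assert both success parameters tend to $x$. In the regime of this proposition, however, $m=|Y_0^N|\asymp Nz$ with $z>0$, so $\tfrac{Nx_N-m}{N-m}\to\tfrac{x-z}{1-z}\neq x$; the two ends of the sandwich then converge to the \emph{different} limits $e^{-\lambda z(x-z)/(1-z)}$ and $e^{-\lambda zx}$, and the squeeze does not close. (This is exactly the regime where Theorem \ref{thm:prototype_general}'s binomial approximation was unavailable, and the same obstruction defeats the crude sandwich.) The alternative you mention, the product formula $\psi_N(a,m)=\prod_{j=0}^{m-1}\bigl(1-(1-q_N)\tfrac{Nx_N-j}{N-j}\bigr)$, is also false: the hypergeometric pgf is a terminating ${}_2F_1$, not a product of linear factors (already for $N=2$, $K=1$, $m=2$ the true pgf is $s$ while the product gives $\tfrac{1+s}{2}$).

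The repair is the argument the paper actually uses, which bypasses the pgf entirely: write $|X_t^N\wedge Y_0^N|=\sum_{i}X_t^N(i)Y_0^N(i)$ and use that, by independence and exchangeability, the hypergeometric overlap concentrates around its mean --- its variance is $O(N)$ --- so that $\tfrac1N\sum_i X_t^N(i)Y_0^N(i)\to\tilde X_t\tilde Y_0$ in distribution. Then
$q_N^{|X_t^N\wedge Y_0^N|}=\bigl(1+\tfrac{N(q_N-1)}{N}\bigr)^{N\cdot\frac1N\sum_i X_t^N(i)Y_0^N(i)}$,
the functions $a\mapsto q_N^{Na}$ converge to $e^{-\lambda a}$ uniformly on $[0,1]$, and bounded convergence gives $\E\bigl[q_N^{|X_t^N\wedge Y_0^N|}\bigr]\to\E\bigl[e^{-\lambda\tilde X_t\tilde Y_0}\bigr]$; your steps (i), (iii) and (iv), including the correctly identified role of independence in factorizing the joint law, then go through unchanged.
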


\begin{proof}
We have
\begin{align*}
\E\left[q_N^{|X_t^N\wedge Y_0^N|}\right]& =\E\left[q_N^{\sum_{i=1}^NX_t^N(i)Y_0^N(i)}\right]\\
	&=\E\left[\left(1+\frac{N(q_N-1)}{N}\right)^{N\cdot\frac{1}{N}\sum_{i=1}^NX_t^N(i)Y_0^N(i)}\right]\rightarrow \E\left[e^{-\lambda\tilde{X}_t\tilde{Y}_0}\right],
\end{align*}
since by exchangeability \noeminew{and independence}, $\frac{1}{N}\sum_{i=1}^NX_t^N(i)Y_0^N(i)\rightarrow \tilde{X}_t\tilde{Y}_0$ \noemi{in distribution.}
\end{proof}
\begin{remark}
Note that for these results we only assume \emph{duality} of the processes, and not necessarily strong pathwise duality
in the sense of \eqref{eq:dual_mech_proc}. \noeminew{An example of a $q-$self-duality, which is not obtained form $q-$dual basic mechanisms,
but from $q-$self-dual \emph{randomized} mechanisms is given in the last section of this paper.}\\
\noemi{If all the approximating processes are constructed from a graphical representation
using $q-$dual mechanisms, these are strongly pathwise dual. However, our construction is not consistent, and therefore 
we do not directly give a pathwise construction of the limiting processes as is obtained from the lookdown construction 
\cite{DonnellyKurtz}. }\end{remark}

In the remainder of the paper, we discuss in some detail the case of annihilating duals and possible dual mechanisms. We restate Theorem \ref{thm:prototype_general} in this 
particular case, and discuss several examples where this result can be applied to rederive certain dualities, mostly known in the literature. The examples illuminate in particular 
the connection between annihilating duals and population models with balancing selection, as studied for example in \cite{BEM}. In the last section we consider different values of 
$q.$ The last example answers an open question of \cite{AlkemperHutzenthaler} concerning a self-duality derived in \cite{AthreyaSwart1}.

\section{Annihilating duality} \label{sec:annihilating}
\subsection{Annihilating dual mechanisms}

In this section, we discuss the special case of a $q$-duality with $q=-1,$ which is an \emph{annihilating duality}.
\noemi{Since $(-1)^{|x\wedge y|}=1-2\times 1_{\{|x\wedge y| \mbox{ is odd}\}},$} this duality relation can be written as
\begin{equation}\label{eq:ann_dual}
\P_x(|X^N_t\wedge Y_0^N|\mbox{ is odd})=\P_y(|X_0^N\wedge Y^N_t|\mbox{ is odd})
\end{equation}
for all $x,y\in E_N.$ 
In order to apply our rescaling result, we identify some basic mechanisms which lead to annihilating dualities. It is interesting to compare them to some of the coalescing 
mechanisms. In the following table, we give a list of the mechanisms that we are interested in, and afterwards discuss their duality relations. 
\begin{center}
\begin{tabular}{|c||c|c|c|c||l|}
\hline 
 & $f(0,0)$ & $f(0,1) $& $f(1,0)$ & $f(1,1)$ &   \\ 
\hline 
\hline
$f^R$ &(0,0) & (0,0) & (1,1) & (1,1) &resampling \\ 
\hline 
$f^C$ &(0,0) & (0,1) & (0,1) & (0,1) & walk-coalescence\\
\hline 
$f^A$ &(0,0) &(0,1) &(0,1)&(0,0)& walk-annihilation\\
\hline
$f^D$&(0,0) & (0,0) & (0,1) & (0,1) & death-walk\\ 
\hline 
$f^{BC}$&(0,0)&(0,1)&(1,1)&(1,1)& branching-coalescence\\
\hline
$f^{BA} $& (0,0) & (0,1) & (1,1) & (1,0) & branching-annihilation \\ 
\hline 
\end{tabular}
\end{center} 
The names given to the mechanisms are chosen to suggest an interpretation. In the resampling mechanism, the first position gives its type (0 or 1) to the second one. In the 
death-walk mechanism, a particle in the second position dies, after which a particle in the first position walks to the second position. Walk mechanisms suggest that a particle 
in the first position jumps to the second position, and either coalesces or annihilates if there is a particle present. In branching mechanisms, a particle in the first position 
produces a new particle in the second position, which either coalesces or annihilates with a particle already present.

\begin{remark}[Coalescing duals] In \cite{AlkemperHutzenthaler}, the coalescing dual mechanisms were classified (for a proof see the list of
dual mechanisms \cite{AHlist} that can be found on the homepage of M. Hutzenthaler). Concerning the dualities given in the above table, the following 
\emph{coalescing} dualities were established: 
(i) $f^R$ and $f^C$ are coalescing duals, (ii) $f^D$ is a coalescing self-dual, and (iii) $f^{BC}$ is a coalescing self-dual.
\noemi{They also show that the identity mechanism, the mechanism which maps all configurations to $(0,0)$ and the mechanism that 
maps $(0,0)\to(0,0)$ and all other configurations to $(1,1)$ are coalescing duals.}
\end{remark}


\begin{lemma}\label{lem:ann_dual}\begin{itemize}
\item[(a)] Two basic mechanisms $f, g$ are annihilating dual mechanisms if and only if
$$|x\wedge (g(y^\dagger))^\dagger|\mbox{ is odd } \Leftrightarrow |f(x)\wedge y|\mbox{ is odd.}$$
\item[(b)] With the notation of the above table, we have the following:
\begin{itemize}
\item[(i)] $f^R$ and $f^A$ are annihilating duals
\item[(ii)] $f^D$ is an annihilating self-dual
\item[(iii)] $f^{BA}$ is an annihilating self-dual.
\end{itemize}
\end{itemize}
\end{lemma}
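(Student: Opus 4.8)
The plan is to prove part (a) as a direct specialization of Definition~\ref{def:dual_mechanism}, and then verify part (b) by finite case-checking, organized so that the work is minimal.

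\textbf{Part (a).} Setting $q=-1$ in \eqref{eq:dual_mechanism}, the annihilating dual condition reads $(-1)^{|x\wedge(g(y^\dagger))^\dagger|}=(-1)^{|f(x)\wedge y|}$ for all $x,y\in\{0,1\}^2$. Since for an integer $n$ one has $(-1)^n=1$ iff $n$ is even and $(-1)^n=-1$ iff $n$ is odd, two integers have equal sign under $(-1)^{(\cdot)}$ precisely when they have the same parity. Hence the displayed identity holds for a given pair $(x,y)$ iff $|x\wedge(g(y^\dagger))^\dagger|$ and $|f(x)\wedge y|$ have the same parity, i.e.\ iff the stated biconditional ``$|x\wedge(g(y^\dagger))^\dagger|$ is odd $\Leftrightarrow$ $|f(x)\wedge y|$ is odd'' holds. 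Quantifying over all $x,y$ gives (a). This is a one-line argument and needs no further machinery.

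\textbf{Part (b).} Here I would use (a) and simply tabulate. Note first that for $x,y\in\{0,1\}^2$ the quantity $|x\wedge y|$ is $x_1y_1+x_2y_2\in\{0,1,2\}$, which is odd iff exactly one of the two coordinatewise products is $1$. For each claimed duality I would compute, for all $4$ choices of $x$ and all $4$ choices of $y$, the left-hand configuration $(g(y^\dagger))^\dagger$ and the right-hand configuration $f(x)$, then compare the parities of $|x\wedge(g(y^\dagger))^\dagger|$ and $|f(x)\wedge y|$. For the self-dualities (ii) and (iii) one takes $g=f$; for (i) one takes $f=f^R$, $g=f^A$ (and, to be thorough, also checks the symmetric pairing since the definition of $q$-dual mechanisms is not manifestly symmetric). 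A convenient bookkeeping device: write out the $2\times 2$ ``parity matrices'' $P^f_{x,y}:=|f(x)\wedge y|\bmod 2$ and $Q^g_{x,y}:=|x\wedge(g(y^\dagger))^\dagger|\bmod 2$ indexed by $x,y\in\{0,1\}^2$, and check $P^f=Q^g$ entrywise. For instance for $f^D$ (death-walk: $(0,0)\mapsto(0,0)$, $(0,1)\mapsto(0,0)$, $(1,0)\mapsto(0,1)$, $(1,1)\mapsto(0,1)$) one verifies that applying $f^D$ to the dagger-conjugated input and re-conjugating produces exactly the transpose-type symmetry that makes the two parity matrices agree; similarly $f^{BA}$ and the pair $(f^R,f^A)$.

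\textbf{Main obstacle.} There is no real obstacle beyond organization: the content is a finite verification over $\{0,1\}^2\times\{0,1\}^2$. The only points requiring a little care are (1) handling the $\dagger$ conjugations correctly---it is easy to slip a coordinate swap---so I would state explicitly that $(g(y^\dagger))^\dagger$ means: swap the coordinates of $y$, apply $g$, then swap back; and (2) remembering that Definition~\ref{def:dual_mechanism} as written is an asymmetric relation, so ``$f$ and $g$ are annihilating duals'' should be read in the direction dictated by \eqref{eq:dual_mechanism}, and for the self-dual claims one must genuinely plug $g=f$ into that same formula rather than assume symmetry. Once those conventions are fixed, the six verifications are routine and I would present them compactly, perhaps displaying one parity matrix computation in full as a representative example and asserting the rest.
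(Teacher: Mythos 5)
Your proposal is correct and follows essentially the same route as the paper: part (a) is immediate from the definition with $q=-1$ since $(-1)^a=(-1)^b$ iff $a$ and $b$ have the same parity, and part (b) is the same finite verification over $\{0,1\}^2\times\{0,1\}^2$, merely organized as parity matrices rather than by listing the odd cases. (Your caution about asymmetry is harmless but unnecessary: substituting $x\mapsto y^\dagger$, $y\mapsto x^\dagger$ in \eqref{eq:dual_mechanism} and using $|a^\dagger\wedge b^\dagger|=|a\wedge b|$ shows the relation is in fact symmetric in $f$ and $g$.)
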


\begin{proof}
$(a)$ is obvious. We verify $(b)$ using the table of basic mechanisms. $(i)$ We have that $f^R(x)\wedge y$ is odd if and only if $x=(1,0)$ or $x=(1,1)$, and $y=(0,1)$ or $(1,0).$ 
In both cases, $(f^A(y^\dagger))^\dagger=(1,0),$ and $(1,0)\wedge x$ is odd if and only if $x\in\{(1,0),(1,1)\}.$ By $(a)$ this proves the duality of $f^R$ and $f^A.$ For $(ii)$ 
note that $f^D(x)\wedge y$ is odd if and only if $x\in\{(1,0),(1,1)\}$ and $y\in\{(0,1),(1,1)\}.$ But then $(f^D(y^\dagger))^\dagger=(1,0),$ and the claim follows. 
$(iii)$ For $f^{BA}(x)\wedge y$ to be odd we need $x=(0,1)$ and $y\in\{(0,1),(1,1)\},$ or $x=(1,0)$ and $y\in\{(0,1),(1,0)\},$ or $x=(1,1)$ and $y\in\{(1,0), (1,1)\}.$ In all 
cases $(f^{BA}(y^\dagger))^\dagger\wedge x$ is odd, and there are no other possibilities.
\end{proof}

\begin{remark} The list of duals is not complete. For a full classification of coalescing duals see \cite{AHlist}. Note that $f^R$ and $f^D$ have both a coalescing 
and an annihilating dual mechanism. 
The death-walk-mechanism $f^D$ is $q$-self-dual for any $q\in\R:$
From the table of dual mechanisms one can check that  $|x\wedge (f^D(y^\dagger))^\dagger|=|f^D(x)\wedge  y|$ for all $x,y\in \{0,1\}^2.$
\noemi{The same is true for the identity and the mechanism that maps all configurations to $(0,0).$}
\end{remark}

\begin{remark}
It is easy to see that $q$-dual mechanisms, $q\neq 1,$ always satisfy $f(0,0)=(0,0).$ However, unlike the case of coalescing duality, a mechanism need not be monotone in order to have an 
annihilating dual, as can be seen from the self-duality of the branching-annihilating mechanism.
\end{remark}

We can now restate our Theorem \ref{thm:prototype_general} in the special case of annihilating duals. This special case is motivated by the observation, made in \cite{BEM}, that
a particular model of populations with balancing selection, after a transformation of the form $x\mapsto 1-2x,$ is dual to a double-branching annihilating process. Our result 
shows why this transformation occurs in annihilating processes. A non-spatial version of this model will be discussed as an example in the following section.

\begin{corollary}
\label{prop:prototype}
Let $(X^N_t),(Y^N_t)$ be Markov processes with state space $E_N$ such that 
\noeminew{$\P_x(|X^N_t\wedge y|\mbox{ is odd})=\P_y(|x\wedge Y^N_t|\mbox{ is odd})$}
holds for all $x,y\in E_N.$ Let $k_N,n_N\in\N,$ and choose exchangeable initial conditions $x_N, y_N\in E_N,$ \noemi{independent
of each other} such that $|x_N|=k_N, |y_N|=n_N.$ Suppose that both processes stay 
exchangeable at $t>0$, and assume that $n_N/N\to 0$ and $\E[|Y^N_{t_N}|/N]\to 0$ as $N\to\infty.$ 
Then
$$\lim_{N\to\infty}\E\left[\left(1-\frac{2|X_0^N|}{N}\right)^{|Y^N_{t_N}|}\right]=
\lim_{N\to\infty}\E\left[\left(1-\frac{2|X_{t_N}^N|}{N}\right)^{|Y^N_{0}|}\right],$$
provided that the limits exist.
\end{corollary}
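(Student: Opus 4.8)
The plan is to obtain this as the special case $q_N\equiv -1$ of Theorem \ref{thm:prototype_general}, so the whole argument reduces to checking that the hypotheses match and that the conclusion specializes correctly.

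First I would rewrite the ``parity'' hypothesis as a $q$-duality in the sense of \eqref{eq:q-dual}. Recall that $(-1)^{|A\cap B|}=1-2\times 1_{\{|A\cap B|\text{ is odd}\}}$; hence, by linearity of expectation, for every $x,y\in E_N$ and every $t\ge 0$,
$$\E_x\bigl[(-1)^{|X_t^N\wedge y|}\bigr]=1-2\,\P_x\bigl(|X_t^N\wedge y|\text{ is odd}\bigr),\qquad \E_y\bigl[(-1)^{|x\wedge Y_t^N|}\bigr]=1-2\,\P_y\bigl(|x\wedge Y_t^N|\text{ is odd}\bigr).$$
Therefore the assumed identity $\P_x(|X^N_t\wedge y|\text{ odd})=\P_y(|x\wedge Y^N_t|\text{ odd})$ for all $x,y\in E_N$ is exactly the statement that $(X_t^N)$ and $(Y_t^N)$ are $q_N$-dual with $q_N=-1$; this is the process-level counterpart of Lemma \ref{lem:ann_dual}(a). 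Since $-1\in[-1,1)$, the assumption $q_N\in[-1,1)$ of Theorem \ref{thm:prototype_general} is satisfied.

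Next I would verify that the remaining hypotheses of Theorem \ref{thm:prototype_general} are present verbatim: the initial conditions $x_N,y_N$ are exchangeable and independent with $|x_N|=k_N$ and $|y_N|=n_N$; both processes stay exchangeable for $t>0$; and $n_N/N\to 0$, $\E[|Y^N_{t_N}|/N]\to 0$ as $N\to\infty$. Then I apply the theorem with $q_N=-1$. In the conclusion the factor $1+(q_N-1)\tfrac{|X|}{N}$ becomes $1-\tfrac{2|X|}{N}$, and the claimed equality of limits follows immediately.

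I do not expect a genuine obstacle: the only point that is not a verbatim substitution into Theorem \ref{thm:prototype_general} is the elementary translation between the parity formulation \eqref{eq:ann_dual} and the $q=-1$ instance of \eqref{eq:q-dual}, which is the one-line computation above. Everything else is inherited directly from the theorem and its proof (in particular the hypergeometric-to-binomial approximation via \cite{DiaconisFreedman}, which is already subsumed in the general statement).
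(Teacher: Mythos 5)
Your proposal is correct and matches the paper's own proof, which likewise obtains the corollary by setting $q=-1$ in Theorem \ref{thm:prototype_general}; your explicit translation of the parity identity into the $q=-1$ duality via $(-1)^{|x\wedge y|}=1-2\times 1_{\{|x\wedge y|\text{ odd}\}}$ is the same observation the paper makes at the start of Section \ref{sec:annihilating}. The paper additionally sketches a second, self-contained route using the fact that a $\mathrm{Bin}(n,p)$ variable is odd with probability $\tfrac{1}{2}\bigl(1-(1-2p)^n\bigr)$, but this is offered only as an alternative viewpoint and is not needed for the argument you give.
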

As before, assuming that a limiting process $(p_t)$ of $1-\frac{2|X^N_{t_N}|}{N}$ and $n_t$ of 
$|Y_{t_N}^N|$ exists, these processes satisfy the moment duality
$$\E_n[p_0^{n_t}]=\E_p[p_t^{n_0}].$$

\begin{proof} Corollary \ref{prop:prototype} is a consequence of Theorem \ref{thm:prototype_general}, by setting $q=-1.$ It can also be understood from
the fact that the probability that a binomial random variable with parameters $n,p$ takes an odd value is given by $\frac{1}{2}\left(1-(1-2p)^n\right).$ Then we have, by 
binomial approximation,
\begin{equation*}
\begin{split}
\P(|X_{t}^N\wedge Y^N_0|\mbox{ is odd }\,|\, |X_{t}^N|=x_N, |Y_0^N|=n_N)
=\frac{1}{2}\left(1-\left(1-\frac{2x_N}{N}\right)^{n_N}\right)+o(1),
\end{split}
\end{equation*}
as in the proof of Theorem \ref{thm:prototype_general}; again duality, averaging over the exchangeable initial conditions, and taking limits, gives the result.
\end{proof}

\subsection{Examples}
In this section we derive some (mostly well-known) dualities by rescaling dualities of interacting particle systems. We will assume that the following mechanisms occur in 
the process $(X^N_t):$ 
$f^R$ occurs with rate $\frac{r_N}{N}$ for each ordered pair $(i,j)$, $i,j\in \{1,...,N\},$
$f^C$ with rate $\frac{c_N}{N},$
$f^A$ with rate $\frac{a_N}{N},$
$f^D$ with rate $\frac{d_N}{N},$
$f^{BA}$ with rate $\frac{b^a_N}{N},$
and $f^{BC}$ with rate $\frac{b^c_N}{N}.$ Moreover, set $b_N:=b_N^a+b_N^c.$\\ Consider the process $|X_t^N|$ taking values in $\{0,...,N\}.$ Note that if $|X_t^N|=k,$ then 
the number of ordered pairs of certain types is easily computed: The number of $(0,1)$-pairs (or equivalently of $(1,0)$-pairs) is equal to $k(N-k),$ the number of $(1,1)-$pairs 
is equal to $k(k-1).$
Hence, the process $|X_t^N|, t\geq 0,$ makes the following transitions:
\begin{eqnarray}
k\to k+1 &\mbox{ at rate } &\frac{r_N+b_N}{N}k(N-k),\label{eq:ktok+1}\\
k\to k-1 &\mbox{ at rate } &\frac{r_N+d_N}{N}k(N-k)+\frac{c_N+d_N+b^a_N}{N}k(k-1),\label{eq:ktok-1}\\
k\to k-2 &\mbox{ at rate } &\frac{a_N}{N}k(k-1).\label{eq:ktok-2}
\end{eqnarray}
\noemi{Note that obviously we could do with fewer mechanisms in order to define the process $|X_t^N|.$ However, playing with
the rates of the different mechansims, we can find different duals to processes constructed in this manner. 
}In the next sections, we will consider processes of this type and their duals for various values and scalings of the rates.

\subsubsection{Branching annihilating process}
Let $a_N=d_N=b_N^c=c_N=0,$ and assume $\frac{r_N}{N}\to\alpha\geq 0$ and  $b_N=b_N^a\to \beta\geq 0,$ as $N\to\infty.$ The different scaling of the mechanism is interpreted in the 
sense that in the limit, the resampling affects pairs of particles, while branching happens at a fixed rate per single particle. The rescaled discrete process $\frac{|X_t^N|}{N}$ 
has, according to \eqref{eq:ktok+1} and \eqref{eq:ktok-1}, the discrete generator 
\begin{equation*}
\begin{split}
\tilde{G}_Nf\left(\frac{k}{N}\right)=&\frac{r_N}{N}k(N-k)\left(f\left(\frac{k+1}{N}\right)+f\left(\frac{k-1}{N}\right)-2f\left(\frac{k}{N}\right)\right)\\
&+\frac{b_N}{N}k(k-1)\left(f\left(\frac{k-1}{N}\right)-f\left(\frac{k}{N}\right)\right)+
\frac{b_N}{N}k(N-k)\left(f\left(\frac{k+1}{N}\right)-f\left(\frac{k}{N}\right)\right).
\end{split}
\end{equation*}
Assume now $\frac{k}{N}\to x$ as $N\to \infty$ and $f$ twice differentiable. Then, noting
$\lim_{N\to\infty}N \left(f\left(\frac{k+1}{N}\right)-f\left(\frac{k}{N}\right)\right)=f'(x)$
and
$\lim_{N\to\infty}N^2\left(f\left(\frac{k+1}{N}\right)+f\left(\frac{k-1}{N}\right)-2f\left(\frac{k}{N}\right)\right)=f''(x),$
we see that $\tilde{G}_Nf(k/N)$ converges to
$$\tilde{G}f(x)=\beta x(1-2x)f'(x)+\alpha x(1-x)f''(x),$$
which is the generator of the one-dimensional diffusion given by the SDE
$$dX_t=\beta X_t(1-2X_t)dt+\sqrt{2\alpha X_t(1-X_t)}dB_t.$$
This is a Wright-Fisher diffusion with local drift $\beta x(1-2x)$. The drift has the effect of pushing $X_t$ towards the values 0 and $1/2$ and may be interpreted as a selection
promoting heterozygosity -- this interpretation will become more evident in the next example. Note that it is not difficult to incorporate death as well: If $d_N\to \delta>0,$  
the resulting diffusion reads
$$dX_t=\beta X_t(1-X_t)dt-\delta X_tdt+\sqrt{\alpha X_t(1-X_t)}dB_t.$$
Consider now the dual process. According to Lemma \ref{lem:ann_dual}, $(Y_t^N)$ where $f^A$ happens at rate $\frac{r_N}{N}, f^{BA}$ at $\frac{b_N}{N}$  is an annihilating dual 
of $(X_t^N).$ The  generator of $|Y_t^N|$ is 
\begin{equation*}
\begin{split}G_Nf(k)=&\frac{b_N}{N} k(N-k)\left(f(k+1)-f(k)\right)+\frac{b_N}{N} k(k-1)\left(f(k-1)-f(k)\right)\\
&+\frac{r_N}{N}k(k-1)\left(f(k-2)-f(k)\right).
\end{split} \end{equation*}
As $N\to\infty,$ when $f(n)\to0$ fast enough as $n\to \infty$, this converges to 
$$Gf(k):=\beta k\left(f(k+1)-f(k)\right)+\alpha k(k-1)\left(f(k-2)-f(k)\right),$$
which is the generator of a branching annihilating process on $\N_0$. Including death, we get
$$Gf(k):=\beta k\left(f(k+1)-f(k)\right)+\alpha k(k-1)\left(f(k-2)-f(k)\right)+\delta k \left(f(k-1)-f(k)\right).$$
\noemi{By corollary 4.8.9 of \cite{EthierKurtz} one obtains weak convergence of $(Y_t^N)$ to a process $(Y_t)$ with generator $G$ 
(noting that the compact containment condition follows from the fact that the annihilation rate is quadratic as opposed to
the linear rate of branching). It should be possible to prove by standard methods in a similar way as in 
\cite{AlkemperHutzenthaler} that $(X_t^N/N)$ converges weakly to the one-dimensional diffusion $(X_t)$ with generator 
$\tilde{G}.$}
By Corollary 
\ref{prop:prototype} we obtain for the limiting processes $(X_t), (Y_t)$ the duality
$$\E_x\left[(1-2X_t)^{Y_0}\right]=\E_y\left[(1-2X_0)^{Y_t}\right].$$
\begin{remark}
\noemi{Note that this is not a new duality. It can also be obtained from Proposition 6(b) in \cite{Swart06}, where a similar
approach is used, but relying on a slightly different type of graphical representation instead of the one we use here in terms of basic
mechanisms. It can also be obtained in the following way:}
Write $p_t:=1-2X_t$. It\^o's formula yields
$dp_t=\beta(p_t^2-p_t)dt-\sqrt{\alpha(1-p_t^2)}dB_t,$
from which -- at least heuristically -- it is easy to read off the moment duality of the process $(p_t)_{t\geq 0}$ and the 
branching annihilating process \noemi{by looking at the exponents of $p_t,$ or by a generator calculation: The generator of $(p_t)$  acts on $f(x)=x^n$ as 
$$Gf(x)=\beta(x^2-x)nx^{n-1}+\frac{\alpha}{2}(1-x^2)n(n-1)x^{n-2}=\beta n(x^{n+1}-x^n)+\alpha{n\choose 2}(x^{n-2}-x^n)$$
where the right-hand side, acting on $x^n$ as a function of $n,$ is the generator of the dual process. }Our method 
establishes this duality in a straightforward manner, and also shows why the transformation $p_t=1-2X_t$ has to be applied.
\end{remark}

\subsubsection{Double-branching annihilating process and populations with balancing selection} 
One of our motivations was to understand the transformation $x\mapsto 1-2x$ applied in \cite{BEM} in order to obtain the duality between the competing species model and 
double-branching annihilating random walk, which is parity preserving. \noemi{Note that \cite{BEM} deals with spatial models,
while our result is one-dimensional, but the connection between annihilating duality and this linear transformation is
not a spatial effect. The situation considered here} is substantially different from our last example, as a branching event produces \emph{two} 
new particles and not one, which is not taken care of by our basic mechanisms. However, it is easily implemented if we allow for multiple arrows in the graphical representation, 
or, equivalently, for basic mechanisms $f:\{0,1\}^3\to\{0,1\}^3.$ 

Assume that for each ordered pair $(i,j)$ the $f^A-$mechanism happens at rate $\frac{a_N}{N},$ and construct an additional mechanism $f$ in the following way: For each ordered 
triple $(i,j,k)$, $i,j,k=1,...,N$, draw, at rate $\frac{b_N}{N^2},$ an arrow from $i$ to $j$ and from $i$ to $k.$ Then, if an arrow is encountered, a transition $f^{BA}$ occurs 
for the two pairs $(i,j)$ and $(i,k).$ This means that at such a double transition, the state of the triple $(x_i,x_j,x_k)$ is changed according to the following table:
\begin{center}
\begin{tabular}{|c||c|c|c|c|c|c|c|c|}
\hline 
 $x$& (000)& (001)&(010) &(100)& (101)&(110)&(011)&(111) \\ 

\hline
$f(x)$ &(000) & (001) & (010) &(111)& (110) &(101)&(011)&(100) \\ 
\hline 
\end{tabular}
\end{center} 
\noemi{Note that the two $f^{BA}-$transitions commute, hence it does not matter which one is applied first.} The dual mechanism $\tilde{f}$ is given by inverting the arrows and applying the dual mechanism $f^{BA}$ to each of the two arrows, that is, to the pairs $(x_j,x_i)$ and 
$(x_k,x_i)$ with the additional rule that two $1$'s at the same place annihilate each other, that is, given by the table
\begin{center}
\begin{tabular}{|c||c|c|c|c|c|c|c|c|}
\hline 
 $x$& (000)& (001)&(010) &(100)& (101)&(110)&(011)&(111) \\ 

\hline
$\tilde{f}(x)$ &(000) & (101) & (110)&(100) & (001) &(010)&(011)&(111) \\ 
\hline 
\end{tabular}
\end{center} 
It is \noemi{easy to check} that these two mechanisms are annihilating dual mechanisms, either by direct verification, or by noting that the double-branching transition is the result of 
two $f^{BA}-$transitions happening one right after the other, cf. Figure 2 for a graphical representation \noemi{where we 
see $f^A-$transitions at time $t_2$ between sites 5 and 4 and at time $t_4$ between 3 and 2, and
$f-$transitions at time $t_1$ between 2, 1 and 3 and at $t_3$ between 2, 4 and 5.}

\begin{center}
\setlength{\unitlength}{1cm}
\begin{picture}(8,5)
\multiput(0,0.5)(2,0){5}{\line(0,1){4.5}}
\put(2,1){\vector(1,0){2}}
\put(2,1){\vector(-1,0){2}}
\put(-0.3,1){$t_1$}
\put(2,1){\circle*{0.1}}
\put(8,1.7){\vector(-1,0){2}}
\put(-0.3,1.7){$t_2$}
\put(8,1.7){\circle*{0.1}}
\put(4,4.2){\vector(-1,0){2}}
\put(4,4.2){\circle*{0.1}}
\put(-0.3,4.2){$t_4$}
\put(2,3){\line(1,0){1.9}}
\put(4.1,3){\vector(1,0){1.9}}
\put(6,3){\vector(1,0){2}}
\put(2,3){\circle*{0.1}}
\put(-0.3,3){$t_3$}
\put(4,3){\arc{0.2}{3.1415}{0}}
\put(0,0.2){1}
\put(2,0.2){2}
\put(4,0.2){3}
\put(6,0.2){4}
\put(8,0.2){5}
\put(3.3,-0.2){\scshape{Figure 2}}
\end{picture}
\end{center}

Let now $(Y_t^N)$ be the process constructed from the graphical representation, where $f^A$ happens at rate $\frac{a_N}{N},$ and $f$ at rate $\frac{b_N}{N^2}.$ Then 
$|Y_t^N|$ has the transitions
\begin{eqnarray*}
k\to k+2&\mbox{ at rate }&\frac{b_N}{N^2}k(N-k)(N-k-1),\\
k\to k-2&\mbox{ at rate }&\frac{b_N}{N^2}k(k-1)(k-2)+\frac{a_N}{N}k(k-1),
\end{eqnarray*}
since $k(N-k)(N-k-1)$ is the number of $(100)-$triples if there are $k$ 1's, etc.
Assume $\frac{a_N}{N}\to\alpha$ and $b_N\to\beta$ as $N\to\infty.$ Then
the generator of $|Y^N_t|$ converges to
$$Gf(k)=\beta k (f(k+2)-f(k))+\alpha k(k-1)(f(k-2)-f(k)),$$
which is the generator of a double-branching annihilating process. For the dual process $(X_t^N),$ with mechanisms $f^R$ and $\tilde{f},$ we obtain the transitions
\begin{eqnarray*}
k\to k+1&\mbox{ at rate }&\frac{b_N}{N^2}2k(N-k)(N-k-1)+\frac{a_N}{N}k(N-k),\\
k\to k-1&\mbox{ at rate }&\frac{b_N}{N^2}2k(k-1)(N-k)+\frac{a_N}{N}k(N-k),
\end{eqnarray*}
which yield for $N\to \infty,$ if $\frac{k}{N}\to x,$
\begin{equation*}\begin{split}\tilde{G}f(x)
=&2\beta x(1-x)(1-2x)f'(x)+\alpha x(1-x)f''(x).\end{split}\end{equation*}
$\tilde G$ is exactly the generator of the non-spatial version of the competing species model of \cite{BEM}, given by the SDE
$$dX_t=2\beta X_t(1-X_t)(1-2X_t)dt+\sqrt{2\alpha X_t(1-X_t)}dB_t.$$
For a motivation of this model as well as results on the long-term behaviour of its spatial version, see \cite{BEM}.

\section{Other values of $q$ and self-duality of the resampling-selection process}
\label{sec:resem}

\noemi{At the end of \cite{AlkemperHutzenthaler},} Alkemper and Hutzenthaler ask whether the self-duality derived in \cite{AthreyaSwart1} for the so-called resampling-selection process
\begin{equation}
\label{eq:resem-process}dX_t=\beta X_t(1-X_t)dt-\delta X_t dt+\sqrt{\alpha X_t(1-X_t)}dB_t
\end{equation}
could be \noeminew{constructed using} the approach of dual basic mechanisms \noemi{(note that in section 2.2.1 we constructed a dual, but
not self-dual process)}. There are several related questions. First, 
	is it possible to construct $(X_t)$ as the scaling limit of self-dual processes $(X_t^N)$ for interacting particle systems in such a way that the self-duality of 
	$(X_t)$ is inherited from the self-duality of $(X_t^N)$ (compare \cite{Swart06}, Prop. 6(a))? 
	Second, is it possible to explain the self-duality of the discrete process $(X_t^N)$ using a pathwise construction? Third, can we choose to construct the discrete 
	processes with $q$-dual basic mechanisms, thus obtaining interacting particle systems that are strongly pathwise dual? 
	\noeminew{An additional fourth question would be to determine whether the limiting self-duality of $(X_t)$ is still pathwise in a 
	suitable sense, a question which is not addressed in \cite{AlkemperHutzenthaler}, and which we do not address in the present paper either.}
	
	As we shall see, the answer to the first two questions is yes: There is a pathwise construction, using the basic mechanisms from Section \ref{sec:annihilating}, 
	yielding $q$-dual processes $(X_t^N)$ and $(Y_t^N)$ that rescale to resampling-selection processes. \noeminew{The answer
	to the third question, however, is no, unless we consider \emph{randomized} basic mechanisms as we will explain below.} \noemi{We start by investigating $q-$dual mechanisms.
	\begin{lemma}\label{lem:q-dual1}
	 Assume that $f$ and $g$ are $q-$dual mechanisms for some $q\notin\{-1,0,1\}.$ Then they are $q-$dual for all $q\in\R.$
	\end{lemma}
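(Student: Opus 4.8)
The plan is to exploit the fact that the defining condition \eqref{eq:dual_mechanism} for $q$-dual mechanisms, namely $q^{|x\wedge(g(y^\dagger))^\dagger|}=q^{|f(x)\wedge y|}$ for all $x,y\in\{0,1\}^2$, is an identity between finite products of copies of $q$. For each fixed pair $(x,y)$ set $a(x,y):=|x\wedge(g(y^\dagger))^\dagger|$ and $b(x,y):=|f(x)\wedge y|$; both take values in $\{0,1,2\}$ since we are on $\{0,1\}^2$. The hypothesis says that the polynomial identity $q^{a(x,y)}=q^{b(x,y)}$ holds at the single value $q=q_0$ for some $q_0\notin\{-1,0,1\}$. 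I want to conclude $a(x,y)=b(x,y)$ for every $(x,y)$, which gives \eqref{eq:dual_mechanism} for all $q\in\R$ simultaneously.

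First I would observe that for real $q_0$ with $q_0\notin\{0,1,-1\}$ the map $k\mapsto q_0^k$ is injective on $\{0,1,2\}$: if $q_0^a=q_0^b$ with $a>b$ then $q_0^{a-b}=1$, and since $a-b\in\{1,2\}$ this forces $q_0\in\{1,-1\}$ (the case $a-b=1$ gives $q_0=1$, the case $a-b=2$ gives $q_0=\pm1$), contradicting the hypothesis. Hence $q_0^{a(x,y)}=q_0^{b(x,y)}$ already implies $a(x,y)=b(x,y)$. Having established $|x\wedge(g(y^\dagger))^\dagger|=|f(x)\wedge y|$ for all $x,y$, the identity $q^{|x\wedge(g(y^\dagger))^\dagger|}=q^{|f(x)\wedge y|}$ is then trivially true for every $q\in\R$ (including $q=0$, with the convention $0^0=1$, and $q=1$), so $f$ and $g$ are $q$-dual mechanisms for all $q\in\R$, as claimed.

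There is no real obstacle here; the only point requiring a moment's care is the boundary-case bookkeeping — making sure that excluding exactly $q_0\in\{-1,0,1\}$ is what is needed for the injectivity of $k\mapsto q_0^k$ on the relevant range $\{0,1,2\}$, and noting that the range is $\{0,1,2\}$ precisely because the mechanisms act on $\{0,1\}^2$ (so $|x\wedge z|\le 2$ always). If one wanted to phrase the argument without the explicit case check, one could instead say: the two functions $q\mapsto q^{a(x,y)}$ and $q\mapsto q^{b(x,y)}$ are polynomials of degree at most $2$ that agree at $q_0$; but this alone is not enough (two distinct polynomials can agree at a point), so the injectivity observation above is genuinely the heart of the matter. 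I would therefore present the short injectivity lemma and conclude.
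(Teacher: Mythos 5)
Your proof is correct and is essentially the paper's own argument: the authors likewise observe that for $q\notin\{-1,0,1\}$ the equality $q^a=q^b$ forces $a=b$, so $|x\wedge(g(y^\dagger))^\dagger|=|f(x)\wedge y|$ holds identically and the duality follows for every $q$. Your version merely spells out the injectivity check on the range $\{0,1,2\}$ in more detail, which is a fine elaboration of the same step.
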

\begin{proof}
If $f$ and $g$ are $q-$dual for $q\notin\{-1,0,1\},$ then we have $|x\wedge (g(y^\dagger))^\dagger|=|f(x)\wedge y|$ for all
$x,y\in \{0,1\}^2,$ since for these $q$ the equality $q^a=q^b$ implies $a=b.$ Hence $f$ and $g$ are $q-$dual for all $q.$
\end{proof}
We also note the following:}
\noemi{\begin{lemma}\label{lem:q-monotone}
Let $f$ and $g$ be $q-$dual mechanisms for $q\notin\{-1,0,1\}.$ Then $|f(x)|\leq |x|$ and $|g(x)|\leq |x|$ for all $x\in\{0,1\}^2.$
\end{lemma}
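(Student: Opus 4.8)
The plan is to exploit the self-dual rigidity already isolated in Lemma \ref{lem:q-dual1}: for $q\notin\{-1,0,1\}$ the relation \eqref{eq:dual_mechanism} collapses to the purely combinatorial identity $|x\wedge(g(y^\dagger))^\dagger|=|f(x)\wedge y|$ for all $x,y\in\{0,1\}^2$, which is what I will use throughout. First I would record the elementary observation, noted in the remark following Lemma \ref{lem:ann_dual}, that any $q$-dual mechanism with $q\neq 1$ satisfies $f(0,0)=(0,0)$ and likewise $g(0,0)=(0,0)$; the only nontrivial assertion concerns $x$ with $|x|\ge 1$.

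The key step is to test the combinatorial identity against the two vectors $y=(1,0)$ and $y=(0,1)$, which together let one read off both coordinates of $f(x)$. Concretely, fix $x\in\{0,1\}^2$ and write $f(x)=(a,b)$. Taking $y=(1,0)$ gives $|f(x)\wedge(1,0)|=a$, while $|x\wedge(g(0,1))^\dagger|$ depends only on the first coordinate of $(g(0,1))^\dagger$, i.e. on the second coordinate of $g(0,1)$; taking $y=(0,1)$ gives $|f(x)\wedge(0,1)|=b$ in terms of the first coordinate of $g(0,1)$. Summing the two evaluations yields $|f(x)|=a+b=$ (first coord. of $x$)$\cdot c_1+$ (second coord. of $x$)$\cdot c_2$, where $c_1,c_2\in\{0,1\}$ are fixed indicator coefficients determined by $g(0,1)$ (namely $c_i=1$ iff the corresponding coordinate of $g(0,1)$ equals $1$). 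Since each $c_i\le 1$, this immediately gives $|f(x)|\le x_1+x_2=|x|$ for every $x$. The statement for $g$ follows by the symmetry of the dual-mechanism relation: if $f,g$ are $q$-dual mechanisms then so are $g,f$ (replace $x$ by $y^\dagger$ and $y$ by $x^\dagger$ in the identity, using $(z^\dagger)^\dagger=z$ and $|z^\dagger\wedge w^\dagger|=|z\wedge w|$), so the same argument applied to the pair $(g,f)$ yields $|g(x)|\le|x|$.

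I expect the only real care needed is the bookkeeping in the "test vector" step: one must be sure that evaluating at $y=(1,0)$ and $y=(0,1)$ genuinely recovers the two coordinates of $f(x)$, and that the right-hand sides $|x\wedge(g(y^\dagger))^\dagger|$ are linear (with $\{0,1\}$-coefficients) in the coordinates of $x$ — which they are, because $(g(y^\dagger))^\dagger$ is a fixed vector once $y$ is fixed, and $|x\wedge v|=\sum_i x_i v_i$. There is no genuine obstacle; the lemma is a two-line consequence of Lemma \ref{lem:q-dual1} once the combinatorial identity is probed at these two points, and a reader could alternatively just verify $|f(x)|\le|x|$ directly from a finite case check against the (short) list of such $f$. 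I would write it in the compact form: "By Lemma \ref{lem:q-dual1}, $|f(x)\wedge y|=|x\wedge(g(y^\dagger))^\dagger|$ for all $x,y$; choosing $y=(1,0)$ and $y=(0,1)$ and adding gives $|f(x)|=\sum_i x_i c_i$ with $c_i\in\{0,1\}$, hence $|f(x)|\le|x|$. The bound for $g$ follows by symmetry."
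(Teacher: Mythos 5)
Your overall strategy (reduce to the combinatorial identity via Lemma \ref{lem:q-dual1}, then probe it with test vectors $y$) is sound, and your symmetry argument showing that $(g,f)$ is again a $q$-dual pair is correct. But the key step has a genuine gap: you claim that summing the evaluations at $y=(1,0)$ and $y=(0,1)$ gives $|f(x)|=x_1c_1+x_2c_2$ with $c_1,c_2\in\{0,1\}$ ``determined by $g(0,1)$''. That is not what the computation yields. Writing $g(0,1)=(u,v)$ and $g(1,0)=(s,t)$, the test $y=(1,0)$ gives $f(x)_1=|x\wedge(v,u)|=x_1v+x_2u$, while the test $y=(0,1)$ gives $f(x)_2=|x\wedge(t,s)|=x_1t+x_2s$ (note that it is $g(1,0)$, not $g(0,1)$, which enters here, since $y^\dagger=(1,0)$). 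Adding, $|f(x)|=x_1(v+t)+x_2(u+s)$, and the coefficients $v+t$ and $u+s$ are each a sum of two indicators, hence a priori in $\{0,1,2\}$ rather than $\{0,1\}$. Your two test vectors alone therefore do not rule out, say, $|f(1,0)|=2$: nothing in the evaluations at $y\in\{(1,0),(0,1)\}$ prevents $g(0,1)_2=g(1,0)_2=1$.

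The missing ingredient is the test vector $y=(1,1)$, which is exactly what the paper uses. From $|f(x)\wedge(1,1)|=|x\wedge(g(1,1))^\dagger|$ one reads off directly $|f(x)|=x_1\,g(1,1)_2+x_2\,g(1,1)_1\le x_1+x_2=|x|$, a genuine linear form in $x$ with $\{0,1\}$ coefficients --- so your intended one-line argument does work if you replace the pair $(1,0),(0,1)$ by the single test vector $(1,1)$. (The paper phrases the same computation contrapositively: if $f(x)=(1,1)$ then $|x\wedge(g(1,1))^\dagger|=2$, which forces $x=(1,1)$; together with $f(0,0)=(0,0)$ this yields $|f(x)|\le|x|$.)
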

\begin{proof}
We know that $f(0,0)=(0,0).$ Assume $f(x)=(1,1).$ Then $|f(x)\wedge (1,1)|=2,$ and by Lemma \ref{lem:q-dual1}, $|x\wedge (g(1,1))^\dagger|=2.$
But this implies $x=(1,1),$ which proves the claim.
\end{proof}}
\noeminew{\begin{remark}
Using these two lemmas and the results of \cite{AHlist}, we can give a complete classification of $q-$dual mechanisms for $q\notin\{-1,0,1\}.$ 
By lemma \ref{lem:q-dual1} we need to consider only the coalescing dual mechanisms of the table on p. 212 of 
\cite{AlkemperHutzenthaler}. Lemma \ref{lem:q-monotone} rules out mechanisms i), ii) and vi) in that table, leaving, in our notation, $f^D,$ the 
identity and the mechanism that maps all configurations to 0. Following the arguments of Alkemper and Hutzenthaler, these are the only
$q-$dual mechanisms, $q\notin \{-1,0,1\},$ modulo the transformations described in Lemma 0.3 of \cite{AHlist}.
\end{remark}}

\noeminew{Going back to our problem, Lemma \ref{lem:q-monotone} implies that any process $|X_t^N|$ where $X_t^N$ is constructed 
from $q-$dual basic mechanisms for $q\notin\{-1,0,1\}$ is
decreasing almost surely. Since this is not the case for the resem-process, the answer to the third question above is no.
However, the positive answer to the first two questions above still allows us to derive this self-duality from 
Theorem \ref{thm:prototype_general}, using an extension of the notion of dual mechanism to randomized mechanisms, 
leading to natural couplings of discrete $q$-dual processes.}\\

We construct a graphical representation of two types of arrows: One type 
occurring at rate $d,$ and the second type at rate $r+b,$ where $b=b^c.$ The first type is associated with the mechanism $f^D.$ 
The second type is associated with a random mechanism $f_q^{R,B}:\{0,1\}^2\to\{0,1\}^2$: the duality parameter $q = r/(b+r$) 
is taken as the (conditional) probability that the arrow is of the resampling type. With probability $1-q$, the arrow is of the branching-coalescence type. 
In other words, let $\zeta$ be a Bernoulli variable with parameter $q,$ then 
$$f^{R,B}_q=\zeta f^R+(1-\zeta)f^{BC}.$$
This mechanism is not self-dual to itself in the sense of definition~\ref{def:dual_mechanism}. However, it is self-dual in a weaker sense, namely if we average over exchangeable initial conditions and over $\zeta:$ 
Let $X$ and $Y$ be $\{0,1\}^2$-valued random variables that are exchangeable ($\P(X=(a,b)) = \P(X=(b,a))$) and independent of each 
other and of 
$\zeta$. We have 
\begin{equation}\label{eq:q-average}
	 \E \Bigl[q^{|f_q^{R,B}(X)\wedge Y|} \Bigr]=\E \Bigl[q^{ |X\wedge f_q^{R,B}(Y^\dagger)^\dagger|}\Bigr]. 
\end{equation} 
To see this, note that if $X=(0,0)$ or $Y=(0,0)$ or if 
$X=Y=(1,1),$ the equality 
is trivially true. If $|X|=1$ and $|Y|=1$ the equality is true by exchangeability, since all combinations 
of $(0,1)$ and $(1,0)$ for $X$ and $Y$ are equally likely. Assume $X=(1,1).$ The cases $Y=(0,1)$ and $Y=(1,0)$ are equally likely, and 
since $f_q^{R,B}((0,1))=\zeta (0,0)+(1-\zeta)(0,1),$ and $f_q^{R,B}((1,0))=(1,1),$ we obtain 
$$\E\Bigl[q^{|X\wedge f^{R,B}_q(Y^\dagger)^\dagger|}\Bigr]=\frac{1}{2}(q\cdot 1+(1-q)q+q^2)=q=\E\Bigl[q^{|f^{R,B}_q(X)\wedge Y|}\Bigr].$$
A similar identity holds, of course, if the mechanism applied to $Y$ uses a Bernoulli variable $\zeta^2$ independent of $\zeta=\zeta^1$. 

Fix a time horizon $T>0$, and $N\in \N$. For simplicity, we drop the $N$-dependence in the notation. We couple $E_N$-valued processes $(X_t)_{0\leq t\leq T}$, $(Y_t)_{0 \leq t \leq T}$ as follows: we start as in the usual graphical representation with Poisson arrows 
of two types and rates $d$ and $r+b$ for each pair of sites as explained above, and $X_0$ and $Y_0$ exchangeable $E_N$-valued random variables, 
independent of each other and of the Poisson variables. 
We add two independent sequences $(\zeta^1_k)_{k\in \N}$, $(\zeta_k^2)_{k\in \N}$ of i.i.d. Bernoulli random variables with parameter $q$, independent from $X_0,Y_0$ and $(\zeta_k)$. Almost surely, there are only finitely many arrows of the second type, occurring at moments $t_1< t_2 < \cdots$. We attach the  variable $\zeta^1_k$ to the arrow at time $t_k$, and construct $(X_t)$ by applying the corresponding mechanisms. $(Y_t)$ is constructed in a similar way, except that the arrows are used from right to left (time $T$ down to $0$) and the randomized mechanisms use the variables $\zeta_k^2$. 
The resulting processes have the property that for all $t\in (0,T)$, an analogue of Eq.~\eqref{eq:q-average}
holds for $X= X_{t-}$ and $Y = Y_{(T-t)-},$
with $\E$ the usual expectation 
or an expectation conditioned on having an arrow of the second type at time $t$.  

It seems natural to call a basic mechanism 
which satisfies \eqref{eq:q-average} a \emph{randomized $q-$self dual mechanism}. Clearly, \eqref{eq:q-average} implies $q-$duality, though 
not strong $q-$duality, of $(X_t)$ and $(Y_t),$ provided $X_t$ and $Y_t$ are exchangeable for all $t.$\\



\noemi{Fix now $\alpha,\beta,\delta>0$, set 
$a_N=c_N=b^a_N=0$, and choose rates $b^c_N\to \beta>0$, $d_N\to \delta>0,$ and $r_N/N\to \alpha/2>0$ as $N\to\infty.$ Fix $T>0$ and let $(X_t^N)_{0\leq t \leq T}$, $(Y_t^N)_{0 
\leq t \leq T}$} be processes constructed \noeminew{as above with randomized mechanisms and} rates $b^c_N$, $d_N$ and $r_N$. 
The discrete rescaled processes $(|X_{t} ^N|/N)_{0\leq t \leq T}$ and $(|Y_t^N|/N)_{0 \leq t \leq T}$ both have formal generator 
\begin{equation}\begin{split}
	G_Nf\left(\frac{k}{N}\right)=&\frac{r_N}{N}k(N-k)\left(f\left(\frac{k+1}		{N}\right)+f\left(\frac{k-1}{N}\right)-2f\left(\frac{k}{N}\right)\right)\\
&+\frac{b_N^c}{N}k(N-k)\left(f\left(\frac{k+1}{N}\right)-f\left(\frac{k}{N}\right)\right)\\
&+\frac{d_N}{N}\big(k(N-k)+k(k-1)\big)\left(f\left(\frac{k-1}{N}\right)-f\left(\frac{k}{N}\right)\right).
\end{split}\end{equation}
For $N\to\infty$ and $k/N\to x,$ this converges to
$$Gf(x)=\frac{\alpha}{2} x(1-x)f''(x)+\beta x(1-x)f'(x)-\delta xf'(x),$$
which is the generator of the diffusion \eqref{eq:resem-process}, and one can show that the rescaled processes converge to two
resampling-selection processes $(X_t)$ and 
$(Y_t)$. Now, by \eqref{eq:q-average} (or by Corollary \ref{prop:cond_q-duality} below), $(X_t^N)$ and $(Y_t^N)$ are dual 
with respect to $q_N=r_N/(b_N^c+r_N)$, and by our assumptions on the rates, we have 
$N(q_N-1)\to -2\beta/\alpha$. 
Proposition \ref{prop:Laplace_scaling} therefore yields
$$\E_x\left[e^{-(2\beta/\alpha) X_t y}\right]=\E_y\left[e^{-(2\beta/\alpha) x Y_t}\right],$$
which is the self-duality of the resampling-selection process proven in \cite{AthreyaSwart1}. 
Thus we have provided a pathwise construction of \noeminew{the self-duality of the discrete approximating processes},
and we have shown that the \noeminew{limiting} self-duality can be obtained by rescaling dual interacting particle systems. 

We should note that the latter fact was shown by Swart \cite{Swart06}.
His argument, however, starts from independent discrete processes $(X_t^N)$ and $(Y_t^N)$, and applies a duality criterion by
Sudbury and Lloyd, see the 
proof of Proposition 6 in \cite{Swart06}. In contrast, our pathwise construction using randomized mechanisms 
provides a non-trivial coupling of
underlying discrete processes, which might be of 
interest in some contexts.
\noeminew{To conclude, we mention that instead of using randomized mechanisms, we could also apply criterion derived in \cite{SudburyLloyd}, 
used by \cite{Swart06}, which easily translates into the setting of $q-$dual mechanisms.}
Sudbury and Lloyd
consider Markov processes on $\{0,1\}^\Lambda,$ for some graph $\Lambda,$ whose generator is of the form 
\begin{equation} \label{eq:sudbury-lloyd-form}
\begin{split}
\overline{G}f(x)=\sum_{i\neq j}\overline{q}(i,j)&\Big(\frac{\overline{a}}{2}x(i)x(j)\big(f(x-\delta_i-\delta_j)-f(x)\big)
+\overline{b} x(i)(1-x(j))\big(f(x+\delta_j)-f(x)\big)\\
&+\overline{c} x(i)x(j)\big(f(x-\delta_i)-f(x)\big)
+\overline{d}x(i)(1-x(j))\big(f(x-\delta_i)-f(x)\big)\\
&+\overline{e}x(i)(1-x(j))\big(f(x-\delta_i+\delta_j)-f(x)\big)\Big),\quad x\in\{0,1\}^\Lambda\end{split}
\end{equation}
with non-negative parameters $\overline{a},\ldots,\overline{e}$, and
$\overline{q}(i,j)$  defined as follows. 
When $i$ and $j$ are neighbors in $\Lambda$ (meaning that they are connected by an edge in the graph), then $\overline{q}(i,j)=1/N_i$, with $N_i$ the number of neighbors of 
$i$; when $i$ and $j$ are not neighbors, $\overline{q}(i,j)=0$. Thus when $\Lambda$ is the complete graph on $\{0,1,\ldots,N\}$,  $\overline{q}(i,j)=1/N$ for all $i\neq j$. 
The letters $\overline{a},\overline{b},\overline{c},\overline{d}$ and $\overline{e}$ refer to
annihilation, branching, coalescence, death and exclusion. \noeminew{Given the process, these rates are unique.}

We are interested in Markov processes with state space \noemi{$\{0,1\}^N$} constructed from the basic mechanisms of Section \ref{sec:annihilating} and rates chosen as 
follows: For every pair $(i,j)$, the mechanisms $f^A$, $f^{BA}$, $f^{BC}$, $f^C$, $f^D$ and $f^R$ happen at rates $a/N$, $b^a/N$, $b^c/N$, $c/N$, $d/N$ and $r/N$. The 
infinitesimal generator of this process is of the Sudbury-Lloyd form \eqref{eq:sudbury-lloyd-form} with 
\begin{equation}\label{eq:graph-to-sl-res}
	\overline{a}=2a, \quad  \overline{b}=b^a+b^c+r,\quad \overline{c}=b^a+c+d,\quad 
	\overline{d}=d+r,\quad  \overline{e}=a+c+d
\end{equation}
and $\overline{q}(i,j)=1/N$ for all $i \neq j$. We shall refer to this process as the process obtained from the basic mechanisms via the rate parameters $a$, $b^a$, $b^c$, $d$ 
and $r$. 
Note that not every Sudbury-Lloyd process can be constructed with our basic mechanisms:  for example, if $2 \overline{e}<\overline{a}$, any solution of Eq. 
\eqref{eq:graph-to-sl-res}  has negative rate parameters $c<0$ or $d<0$. Furthermore, the construction is not unique -- \noeminew{note that
\eqref{eq:graph-to-sl-res} fixes $a=\overline{a}/2$ and $b^a=\overline{c}-\overline{e}+\overline{a}/2,$ but leaves one degree of freedom in the choice
of $b^c, c, d$ and $r.$ }

Sudbury and Lloyd give several conditions for $q$-duality of their models. A concise formula is  \cite[Eq. (9)]{Sudbury}, which in our notation reads 
\begin{equation} \label{eq:sudbury}
	\overline{a}'=\overline{a}+2q\gamma,\quad \overline{b}'=\overline{b}+\gamma,\quad \overline{c}'=\overline{c}-(1+q)\gamma,\quad \overline{d}'=\overline{d} +\gamma,
	\quad \overline{e}'=\overline{e}-\gamma
\end{equation}
where $\gamma= (\overline{a}+\overline{c}-\overline{d}+ \overline{b}q)/(1-q)$. Eq. \eqref{eq:sudbury} is easily translated into a criterion for processes obtained from our 
basic mechanisms. \noeminew{This gives a necessary and sufficient condition for duality of Sudbury-Lloyd processes, see \cite{Sudbury}. Plugging 
\eqref{eq:graph-to-sl-res} into \eqref{eq:sudbury} then easily leads to the following criterion for $q-$duality of processes constructed
from basic mechanisms:}

\begin{corollary}\label{prop:cond_q-duality}
Let $(X_t), (Y_t)$ be the Sudbury-Lloyd processes obtained from our basic mechanisms with respective rate parameters $a, b^a, b^c, c, d,r$ and $a',{b^a}',{ b^c}', c', d',r'$. 
Then
\begin{enumerate} 
	\item[(a)]$(X_t)$ and $(Y_t)$ are dual with parameter $q\in\R\setminus\{1\}$ if and only if 
 \noeminew{ \begin{equation*} 
 a'=a+q\gamma ,\quad   {b^a}'=b^a,\quad {b^c}'+r'=b^c+r +\gamma,\quad c'+d'=c+d-(1+q)\gamma,\quad d'+r'=d+r+\gamma,
 \end{equation*}}
where $\gamma=(2a+(1+q)b^a +qb^c+ c- (1-q)r)/(1-q).$ 
	\item[(b)] $(X_t)$ is self-dual with parameter $q$ if and only if $q = (r- 2a-b^a-c)/(b^a+b^c+r)$. 
\end{enumerate}
\end{corollary}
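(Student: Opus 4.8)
The plan is to reduce both parts to the Sudbury--Lloyd criterion \eqref{eq:sudbury} by substituting the dictionary \eqref{eq:graph-to-sl-res}, after which everything is elementary linear algebra. Both $(X_t)$ and $(Y_t)$ are of the form \eqref{eq:sudbury-lloyd-form}, with barred parameters $\overline a,\dots,\overline e$ and $\overline a',\dots,\overline e'$ determined by the respective rate parameters through \eqref{eq:graph-to-sl-res}; since \eqref{eq:sudbury} is a necessary and sufficient condition for $q$-duality of Sudbury--Lloyd processes \cite{Sudbury}, the processes $(X_t)$ and $(Y_t)$ are $q$-dual precisely when the primed barred parameters are the images of the unprimed ones under \eqref{eq:sudbury}.

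First I would rewrite $\gamma$ in the basic-mechanism rates: substituting \eqref{eq:graph-to-sl-res} into $\gamma=(\overline a+\overline c-\overline d+\overline b q)/(1-q)$ gives the numerator $2a+b^a+c-r+q(b^a+b^c+r)$, which equals $2a+(1+q)b^a+qb^c+c-(1-q)r$, the value of $\gamma$ stated in the corollary. For (a) I would then substitute \eqref{eq:graph-to-sl-res} for both processes into the five equations of \eqref{eq:sudbury}. The $\overline a$- and $\overline d$-equations become $a'=a+q\gamma$ and $d'+r'=d+r+\gamma$, which appear verbatim in the statement. The remaining three,
\[ {b^a}'+{b^c}'+r'=b^a+b^c+r+\gamma,\qquad {b^a}'+c'+d'=b^a+c+d-(1+q)\gamma,\qquad a'+c'+d'=a+c+d-\gamma, \]
still involve ${b^a}'$; subtracting the $\overline c$-equation from the $\overline e$-equation yields $a'-{b^a}'=a-b^a+q\gamma$, and combined with $a'=a+q\gamma$ this forces ${b^a}'=b^a$. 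Feeding ${b^a}'=b^a$ back into the $\overline b$- and $\overline c$-equations produces ${b^c}'+r'=b^c+r+\gamma$ and $c'+d'=c+d-(1+q)\gamma$, the last two identities of the statement. The converse implication is obtained by running the same substitutions backwards, which proves (a).

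For (b), self-duality with parameter $q$ means that $(X_t)$ is $q$-dual to a copy of itself, so by the necessity direction of the Sudbury--Lloyd criterion we need $\overline a'=\overline a,\dots,\overline e'=\overline e$; in particular $\overline b'=\overline b+\gamma=\overline b$ forces $\gamma=0$, and conversely $\gamma=0$ turns \eqref{eq:sudbury} into the identity, so self-duality holds if and only if $\gamma=0$. Setting the numerator of $\gamma$ to zero and solving for $q$ gives $q(b^a+b^c+r)=r-2a-b^a-c$, that is $q=(r-2a-b^a-c)/(b^a+b^c+r)$.

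I do not expect a serious obstacle: the calculations are routine. The only point deserving care is the bookkeeping in (a) --- recognising that ${b^a}'=b^a$ is forced, and that the criterion can constrain only the sums ${b^c}'+r'$ and $c'+d'$ rather than the individual rates, which is exactly the non-uniqueness of the parametrisation \eqref{eq:graph-to-sl-res} already noted above.
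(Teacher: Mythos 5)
Your proposal is correct and follows the same route as the paper, which obtains the corollary precisely by plugging the dictionary \eqref{eq:graph-to-sl-res} into the Sudbury--Lloyd criterion \eqref{eq:sudbury} (the paper leaves the substitution as "easily leads to"; you supply the bookkeeping, including the key observation that the $\overline{c}$- and $\overline{e}$-equations together force ${b^a}'=b^a$ while only the sums ${b^c}'+r'$ and $c'+d'$ are constrained). Your reduction of part (b) to $\gamma=0$ via $\overline{b}'=\overline{b}+\gamma$ and the uniqueness of the barred rates is likewise exactly what the paper intends.
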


\subsubsection*{Acknowledgements} Both authors wish to thank the Hausdorff Research Institute for Mathematics in Bonn for hospitality, and Matthias Hammer for many useful comments. We 
also thank two anonymous referees for many helpful and detailed suggestions which considerably improved the paper.


\begin{thebibliography}{GKRK09}

\bibitem[AH07]{AlkemperHutzenthaler}
R. Alkemper and M. Hutzenthaler.
\newblock{Graphical representation of some duality relations in stochastic population models.}
\newblock{\em Electr. Comm. Prob.} \textbf{12}, 206--220 (2007)

\bibitem[AH07a]{AHlist}
R. Alkemper and M. Hutzenthaler.
\newblock{Dual basic mechanisms} Available at \url{http://evol.bio.lmu.de/people/group_metzler1/hutzenthaler_m/publikationen/classification.pdf}

\bibitem[AS05]{AthreyaSwart1}
S. Athreya and  J. Swart.
\newblock{Branching-coalescing particle systems.}
\newblock {\em  Prob. Theory Relat. Fields} \textbf{131}, 376--414 (2005)


\bibitem[BEM07]{BEM}
J. Blath, A. Etheridge and M. Meredith.
\newblock Coexistence in locally regulated competing populations and survival of branching annihilating random walk.
\newblock{\em Ann. Appl. Probab.} \textbf{17}, 1474--1507 (2007)

\bibitem[CS85]{CliffordSudbury}
P. Clifford, A. Sudbury.
\newblock A sample path proof of the duality for stochastically monotone
  {M}arkov processes.
\newblock {\em Ann. Probab.}, 13(2):558--565 (1985)

\bibitem[DF80]{DiaconisFreedman}
P. Diaconis and D.A. Freedman.
\newblock Finite exchangeable sequences.
\newblock{\em Ann. Probab.} \textbf{8}, 745--764 (1980)

 \bibitem[DK96]{DonnellyKurtz}
 P. Donnelly, Th.~G. Kurtz.
 \newblock{A countable representation of the Fleming-Viot measure valued diffusion}.
 \newblock{\em Ann. Probab.} 24(2):698--742 (1996).

\bibitem[EK]{EthierKurtz}
S.N. Ethier and  T. Kurtz.
\newblock{\em Markov processes: Characterization and Convergence.}
\newblock Wiley, 1986, 2005.

\bibitem[Gri79]{Griffeath}
D. Griffeath.
\newblock{\em Additive and Cancellative Interacting Particle Systems.}
\newblock Springer Lecture Notes in Mathematics 724, 1979.

\bibitem[Har78]{Harris}
T.E. Harris.
\newblock { Additive set-valued Markov processes and graphical methods.}
\newblock {\em Ann. Probab.} \textbf{6}, 355--3778 (1978)

\bibitem[Lig05]{Liggett}
T.M. Liggett.
\newblock {\em Interacting particle systems}.
\newblock Classics in Mathematics. Springer-Verlag, Berlin, 2005.
\newblock Reprint of the 1985 original.

\bibitem[SL95]{SudburyLloyd}
A. Sudbury and P. Lloyd.
\newblock Quantum operators in classical probability theory. {II}. {T}he
  concept of duality in interacting particle systems.
\newblock {\em Ann. Probab.}, \textbf{23}, 1816--1830 (1995)


\bibitem[S00]{Sudbury}
A. Sudbury.
\newblock Dual families of interacting particle systems on graphs.
\newblock {\em J. Theor. Probab.} \textbf{13}, 695--716 (2000).

\bibitem[Sw06]{Swart06}
J. Swart.
\newblock Duals and thinnings of some relatives of the contact process.
\newblock ArXiv:math.PR/0604335.
A shortened version of this preprint has been published as p. 203-214 in: Prague Stochastics 2006, M. Hu\v{s}kov{\'a} and M. Jan\v{z}ura (eds.), Matfyzpress, Prague, 2006.
\end{thebibliography}
\end{document}